\theoremstyle{definition}
\newtheorem{theorem}{Theorem}
\newtheorem{proposition}[theorem]{Proposition}
\newtheorem{corollary}[theorem]{Corollary}
\numberwithin{equation}{section}
\numberwithin{theorem}{section}
\begin{document}

\begin{center}
{\bf{\Large Differential equations involving \\ cubic theta functions and Eisenstein series }}
\end{center}

\begin{center}
By Kazuhide Matsuda
\end{center}

\begin{center}
Faculty of Fundamental Science, National Institute of Technology, Niihama College,\\
7-1 Yagumo-chou, Niihama, Ehime, Japan, 792-8580. \\
E-mail: matsuda@sci.niihama-nct.ac.jp  \\
Fax: 81-0897-37-7809 
\end{center}

\noindent
{\bf Abstract}
In this paper, 
we derive systems of ordinary differential equations (ODEs) satisfied by modular forms of level three, 
which are level three versions of Ramanujan's system of ODEs satisfied by the classical Eisenstein series.  
\newline
{\bf Key Words:} theta function; theta constant; rational characteristics.
\newline
{\bf MSC(2010)}  14K25;  11E25

\section{Introduction}
\label{intro}
Throughout this paper, let $\mathbb{N}_0,$ and $\mathbb{N}$ 
denote the sets of nonnegative integers and positive integers. 
For the positive integers $j,k,$ and $n\in\mathbb{N},$ 
$d_{j,k}(n)$ denotes the number of positive divisors $d$ of $n$ such that $d\equiv j \,\,\mathrm{mod} \,k.$ 
Moreover, 
for $k$ and $n\in\mathbb{N},$ 
$\sigma_k(n)$ is the sum of the $k$-th power of the positive divisors of $n,$ 
and 
$d_{j,k}(n)=\sigma_k(n)=0$ for $n\in\mathbb{Q}\setminus\mathbb{N}_0.$ 
For each $n\in\mathbb{N},$ set 
\begin{equation*}
\left(\frac{n}{3}\right)
=
\begin{cases}
+1, &\text{if $ n \equiv 1 \,\,\mathrm{mod} \,3,$}  \\
-1, &\text{if $n\equiv -1 \,\,\mathrm{mod} \,3,$}  \\
0, &\text{if $n\equiv 0 \,\,\mathrm{mod} \,3.$}
\end{cases}
\end{equation*}
\par
The {\it upper half plane} $\mathbb{H}^2$ is defined by 
$
\mathbb{H}^2=
\{
\tau\in\mathbb{C} \,\, | \,\, \Im \tau>0
\}. 
$
Throughout this paper, set $q=\exp(2\pi i  \tau)$ and define the {\it Dedekind eta function} by 
\begin{equation*}
\displaystyle
\eta(\tau)=q^{\frac{1}{24}} \prod_{n=1}^{\infty} (1-q^n)=q^{\frac{1}{24}} (q; q)_{\infty}. 
\end{equation*}
The Eisenstein series $E_2, E_4$, and $E_6$ are defined by 
\begin{align*}
E_2(q)=E_2(\tau)&:=1-24\sum_{n=1}^{\infty} \sigma_1(n) q^n, \,\,
E_4(q)=E_4(\tau):=1+240\sum_{n=1}^{\infty} \sigma_3(n) q^n, \\
E_6(q)=E_4(\tau)&:=1-504\sum_{n=1}^{\infty} \sigma_5(n) q^n, \,\,\, \sigma_k(n)=\sum_{d|n} d^k \,\, \mathrm{for} \,\,k\in\mathbb{N}. 
\end{align*}
\par
Many research efforts have been devoted to ordinary differential equations (ODEs) satisfied by modular forms. 
Classical examples include 
Ramanujan's coupled ODEs for Eisenstein series $E_2, E_4, E_6;$  
Pol and Rankin's fourth-order ODE satisfied by $\Delta=\eta^{24}(\tau);$ 
Jacobi's third-order ODE satisfied by the theta null functions $\vartheta_2, \vartheta_3, \vartheta_4,$ which are defined by equation (\ref{eqn:theta-null}).  
The proofs of these respective ODEs can be found in the papers by Berndt \cite[pp.92]{Berndt}, Rankin \cite{Rankin}, and Jacobi \cite{Jacobi}. 
\par
Halphen subsequently \cite{Halphen} rewrote Jacobi's ODE as a nonliear dynamical system: 
\begin{equation*}
X^{\prime}+Y^{\prime}=2XY, \,
Y^{\prime}+Z^{\prime}=2YZ, \,
Z^{\prime}+X^{\prime}=2ZX. \,
\end{equation*} 
In particular, ODEs of the quadratic type are known as Halphen-type systems. 
\par
Recently, 
Ohyama \cite{Ohyama1, Ohyama2} 
reconsidered Jacobi's ODE by taking into account Picard-Fuchs equations of elliptic modular surfaces, and 
following Jacobi's idea, derived a Halphen-type system satisfied by modular forms of level three. 
Following Ohyama, Mano \cite{Mano} derived ODEs satisfied by modular forms of level five. 
\par
Ramanujan's system of ODEs is expressed as follows: 
\begin{equation}
\label{eqn:Ramanujan-ODE}
q\frac{d E_2}{dq}=\frac{(E_2)^2-E_4  }{12  }, \,\,
q\frac{d E_4}{dq}=\frac{E_2 E_4-E_6  }{3  }, \,\,
q\frac{d E_6}{dq}=\frac{E_2 E_6-(E_4)^2  }{2  }, 
\end{equation}
which is equivalent to Chazy's third-order nonlinear ODE,
\begin{equation*}
y^{\prime \prime \prime}=
2y y^{\prime \prime}-3(y^{\prime})^2, \,\,y=\pi i E_2(\tau). 
\end{equation*}
Ohyama \cite{Ohyama0} showed that 
Halphen's differential field is an extension of Ramanujan's differential field, whose Galois group is the symmetric group, $S_3.$ 
\par
Ramamani \cite{Ramamani} introduced 
\begin{equation*}
\mathcal{P}=
1-8
\sum_{n=1}^{\infty}
\frac{(-1)^n n q^n}{1-q^n},  \,\,
\tilde{\mathcal{P}}=
1+24\sum_{n=1}^{\infty} \frac{n q^n}{1+q^n}, \,\,
\mathcal{Q}=1+16 \sum_{n=1}^{\infty} \frac{(-1)^n n^3 q^n   }{1-q^n  }, 
\end{equation*}
and derived a system of ODEs satisfied by the $\mathcal{P}, \tilde{\mathcal{P}}, \mathcal{Q}, $ modular forms of $\Gamma_0 (2).$
Ablowitz \cite{Ablowitz} et al. showed that 
this system is equivalent to the following third-order non-linear ODE found by Bureau \cite{Bureau}, 
\begin{equation*}
y^{\prime \prime \prime}=
2 y y^{\prime \prime}-(y^{\prime})^2
+2
\frac
{
(y^{\prime \prime}-y y^{\prime}  )^2
}
{
2 y^{\prime}-y^2
}, \,\, y=\mathcal{P}(\tau),
\end{equation*}
and that it is equivalent to a Halphen-type system.
Maier \cite{Maier} generalized these results to the Hecke group $\Gamma_0(N) \,\,(N=2,3,4). $
\par
In \cite{Huber}, Huber derived systems of ODEs satisfied by the cubic theta functions,
\begin{align*}
a(q)=&\sum_{m,n\in\mathbb{Z}} q^{m^2+mn+n^2}, \,\,
b(q)=\sum_{m,n\in\mathbb{Z}} \omega^{n-m} q^{m^2+mn+n^2}, \\
c(q)=&\sum_{m,n\in\mathbb{Z}} q^{(n+\frac13)^2+(n+\frac13)(m+\frac13)+(m+\frac13)^2}, \,\,\omega=e^{\frac{2\pi i}{3}}, \,\,|q|<1.
\end{align*}
The systems of ODEs are given by 
\begin{equation}
\label{eqn:Huber-ODE-1}
q\frac{d}{dq}a=\frac{a\mathscr{P}-b^3}{3}, \,\,
q\frac{d}{dq}\mathscr{P}=\frac{\mathscr{P}^2-ab^3}{3}, \,\,
q\frac{d}{dq}b^3=\mathscr{P}b^3-a^2 b^3,\,\,
\end{equation}
and
\begin{equation}
\label{eqn:Huber-ODE-2}
q\frac{d}{dq}a=\frac{c^3-a\mathcal{P}}{3}, \,\,
q\frac{d}{dq}\mathcal{P}=\frac{ac^3-\mathcal{P}^2}{3}, \,\,
q\frac{d}{dq}c^3=c^3a^2-\mathcal{P}c^3,
\end{equation}
where
\begin{equation*}
\mathscr{P}(q)=1-6\sum_{n=1}^{\infty} \frac{\cos(2n\pi/3)nq^n}{1-q^n}, \,\,
\mathcal{P}(q)=9\sum_{n=1}^{\infty}\frac{n(q^n+q^{2n})}{1-q^{3n}}. 
\end{equation*}
\par
The aim of the research presented in this paper is to derive systems of ODEs satisfied by $a(q)$ by means of 
Farkas and Kra's theory of 
theta functions with rational characteristics.

\par
Our main theorems are as follows.

\begin{theorem}
\label{thm-level3-E_2(q)}
{\it
For $q\in\mathbb{C}$ with $|q|<1,$ 
set 
\begin{align}
P(q)=&a(q)=\sum_{m,n\in\mathbb{Z}} q^{m^2+mn+n^2},  \,\,
Q(q)=E_2(q)=1-24\sum_{n=1}^{\infty} \sigma_1(n) q^n,   \notag  \\
R(q)=&b^3(q)=
\frac
{
(q;q)_{\infty}^9
}
{
(q^3;q^3)_{\infty}^3
}
=
1-9
\sum_{n=1}^{\infty}q^n \left( \sum_{d|n} d^2 \left( \frac{d}{3} \right)  \right).  \label{eqn:main:pro-series(1)}
\end{align}
Then, we have 
\begin{equation}
\label{eqn-level3-E_2(q)}
q\frac{d}{dq}P=\frac{3P^3+PQ-4R}{12},  \,\,
q\frac{d}{dq}Q=\frac{-9P^4+8PR+Q^2}{12}, \,\,
q\frac{d}{dq}R=\frac{-P^2R+QR}{4}. 
\end{equation}
}
\end{theorem}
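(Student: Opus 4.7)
The plan is to reduce the system (\ref{eqn-level3-E_2(q)}) to Huber's system (\ref{eqn:Huber-ODE-1}) and Ramanujan's ODEs (\ref{eqn:Ramanujan-ODE}) via two algebraic identities among level-three modular forms:
\begin{equation*}
\text{(i)}\quad 4\mathscr{P}=3a^{2}+E_{2},\qquad \text{(ii)}\quad E_{4}=9a^{4}-8ab^{3}.
\end{equation*}
Granting (i) and (ii), the three ODEs follow from Huber's and Ramanujan's systems by direct substitution.

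First I would establish (i) and (ii) using Farkas--Kra's theory of theta functions with rational characteristics. Since $a(\tau)$ is a weight-one modular form on $\Gamma_0(3)$ with quadratic nebentypus, $a^{2}$ and $a^{4}$ are genuine modular forms on $\Gamma_0(3)$ of weights $2$ and $4$. Regrouping the Lambert series defining $\mathscr{P}$ according to the residue of $n$ modulo $3$ gives $\mathscr{P}=(9E_{2}(3\tau)-E_{2}(\tau))/8$, so (i) is equivalent to the weight-two identity $2a^{2}=3E_{2}(3\tau)-E_{2}(\tau)$; that is, $a^{2}$ is, up to scalar, the unique holomorphic weight-two Eisenstein series on $\Gamma_0(3)$, an identity that is clean in the theta-constant framework. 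Identity (ii) is a weight-four identity in the finite-dimensional space $M_{4}(\Gamma_0(3))$, and, after writing $a$ and $b$ as Farkas--Kra theta constants, it reduces to matching a bounded number of initial $q$-coefficients, obtainable from the definition of $a$, from the series (\ref{eqn:main:pro-series(1)}) for $b^{3}$, and from the standard expansion of $E_{4}$.

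Granting (i) and (ii), substituting $\mathscr{P}=(3P^{2}+Q)/4$ into the first equation of (\ref{eqn:Huber-ODE-1}) gives
\begin{equation*}
q\frac{dP}{dq}=\frac{P\mathscr{P}-R}{3}=\frac{P(3P^{2}+Q)/4-R}{3}=\frac{3P^{3}+PQ-4R}{12},
\end{equation*}
and into the third equation of (\ref{eqn:Huber-ODE-1}) gives
\begin{equation*}
q\frac{dR}{dq}=R(\mathscr{P}-P^{2})=R\!\left(\frac{3P^{2}+Q}{4}-P^{2}\right)=\frac{-P^{2}R+QR}{4}.
\end{equation*}
Substituting (ii) into the $E_{2}$-equation of (\ref{eqn:Ramanujan-ODE}) yields
\begin{equation*}
q\frac{dQ}{dq}=\frac{Q^{2}-E_{4}}{12}=\frac{-9P^{4}+8PR+Q^{2}}{12},
\end{equation*}
which matches (\ref{eqn-level3-E_2(q)}) exactly. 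Consistency with the middle equation of (\ref{eqn:Huber-ODE-1}) is automatic since it is a consequence of the first and the two identities.

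The main obstacle is the clean proof of the two algebraic identities, especially the weight-four identity (ii); the theta-constant calculus of Farkas--Kra is what organises the relevant coefficient comparisons and modular-form spaces efficiently. Once (i) and (ii) are in hand, the three ODEs collapse to the few lines of substitution displayed above.
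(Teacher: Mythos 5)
Your proposal is correct, but it takes a genuinely different route from the paper. You reduce the system to Huber's ODEs (\ref{eqn:Huber-ODE-1}) and Ramanujan's ODEs (\ref{eqn:Ramanujan-ODE}) via the two identities $4\mathscr{P}=3a^{2}+E_{2}$ and $E_{4}=9a^{4}-8ab^{3}$; all three of your substitutions check out algebraically, and both identities are true --- the first is equivalent to $2a^{2}(q)=3E_{2}(q^{3})-E_{2}(q)$, which is the content of the paper's Theorem \ref{thm:a^2(q)}, and the second is exactly the paper's Theorem \ref{thm-E4-E6-level3-q}. Your proposed proofs of the identities by the one- and two-dimensionality of the weight-$2$ and weight-$4$ spaces on $\Gamma_{0}(3)$ are standard and sound. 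The paper instead gives a self-contained derivation: it sets $X,Y,Z$ equal to ratios of theta constants with characteristics $[1,\tfrac13]$ and $[1,1]$ (identified with $-\tfrac{\pi}{\sqrt3}a$, $-\pi^{2}E_{2}$, and a multiple of $b^{3}$ via Propositions \ref{prop:1st-derivative} and \ref{prop:E2} and Jacobi's triple product), substitutes $z=-1/3$ into the logarithmic-derivative formulas of equations (\ref{eqn:theta-derivative-3rd})--(\ref{eqn:theta-derivative-5th}), uses the residue identity (\ref{eqn:relation-for-thm-1-1/3}) to eliminate the second derivative, and converts $\zeta$-derivatives to $\tau$-derivatives with the heat equation (\ref{eqn:heat}). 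What your route buys is brevity; what it costs is independence, since the theorem becomes a corollary of Huber's published system, whereas the paper's stated aim is to rederive these ODEs purely from Farkas--Kra's theory (and it obtains $E_{4}=9a^{4}-8ab^{3}$ and Ramanujan's ODEs as \emph{outputs} in Sections 5.4--5.6 rather than inputs). Two points you should make explicit: your use of Ramanujan's $E_{2}$-equation is non-circular only because it has a classical proof independent of this paper (the paper rederives it from Theorem \ref{thm-level3-E_2(q)}); and identity (ii) is not actually needed, since the middle ODE also follows by differentiating $Q=4\mathscr{P}-3P^{2}$ and using the first two equations of (\ref{eqn:Huber-ODE-1}) together with identity (i).
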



\begin{theorem}
\label{thm-level3-E_2(q^3)}
{\it
For $q\in\mathbb{C}$ with $|q|<1,$ set
\begin{align}
P(q)=&a(q)=\sum_{m,n\in\mathbb{Z}} q^{m^2+mn+n^2},  \,\,
Q(q)=E_2(q^3)=1-24\sum_{n=1}^{\infty} \sigma_1(n) q^{3n}, \notag  \\
R(q)=&c^3(q)=27
q\frac{(q^3;q^3)_{\infty}^9}{(q;q)_{\infty}^3}
=27\sum_{n=1}^{\infty} q^n \left( \sum_{d|n} d^2 \left( \frac{n/d}{3} \right)  \right). 
\label{eqn:main:pro-series(2)}
\end{align}
Then, we have 
\begin{equation}
\label{eqn-level3-E_2(q^3)}
q\frac{d}{dq}P=\frac{-3P^3+3PQ+4R}{12},  \,\,
q\frac{d}{dq}Q=\frac{-9P^4+8PR+9Q^2}{36}, \,\,
q\frac{d}{dq}R=\frac{P^2R+3QR}{4}. 
\end{equation}
}
\end{theorem}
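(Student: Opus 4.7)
The plan is to deduce Theorem \ref{thm-level3-E_2(q^3)} from Theorem \ref{thm-level3-E_2(q)} using two classical bridging identities:
\begin{equation*}
\text{(a)}\ \ a(q)^3 = b(q)^3 + c(q)^3 \qquad\text{and}\qquad \text{(b)}\ \ E_2(q) = 3E_2(q^3) - 2\,a(q)^2.
\end{equation*}
Identity (a) is the Borwein cubic theta identity, which is accessible from the Farkas--Kra theta constant framework already used to prove Theorem \ref{thm-level3-E_2(q)}. Identity (b) follows because $E_2(\tau) - 3E_2(3\tau)$ is a holomorphic modular form of weight two on $\Gamma_0(3)$, a one-dimensional space spanned by $a(q)^2$; the proportionality constant is pinned down by comparing one coefficient (for instance, the $q^1$ coefficient gives $-24 = -2\cdot 12$).

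In the notation of Theorem \ref{thm-level3-E_2(q^3)}, these bridging relations read $b^3(q) = P^3 - R$ and $E_2(q) = 3Q - 2P^2$. Substituting both into the first equation of \eqref{eqn-level3-E_2(q)} yields
\begin{equation*}
q\frac{d}{dq}P = \frac{3P^3 + P(3Q - 2P^2) - 4(P^3 - R)}{12} = \frac{-3P^3 + 3PQ + 4R}{12},
\end{equation*}
which is the first equation of \eqref{eqn-level3-E_2(q^3)}.

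For the second equation, I would differentiate the relation $E_2(q) = 3Q - 2P^2$ with respect to $\log q$ to obtain $q\frac{d}{dq}E_2(q) = 3\,q\frac{d}{dq}Q - 4P\,q\frac{d}{dq}P$, then solve for $q\frac{d}{dq}Q$. Substituting the second equation of \eqref{eqn-level3-E_2(q)}, the formula just derived for $q\frac{d}{dq}P$, and the two bridging relations, a routine polynomial simplification should produce $q\frac{d}{dq}Q = (-9P^4 + 8PR + 9Q^2)/36$. The third equation is handled analogously: write $q\frac{d}{dq}R = q\frac{d}{dq}(P^3 - b^3) = 3P^2\,q\frac{d}{dq}P - q\frac{d}{dq}b^3$, substitute from the first and third equations of \eqref{eqn-level3-E_2(q)}, and simplify with (a) and (b) to reach $(P^2 R + 3QR)/4$.

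The main obstacle is verifying the bridging identities (a) and (b) inside the paper's theta-function setup, since everything after that is polynomial bookkeeping. Of the two, identity (a) is the more substantial: expressing the lattice sum $a(q)^3$ as $b^3 + c^3$ requires a nontrivial theta-constant manipulation. Identity (b), by contrast, is essentially a one-step dimension argument on $M_2(\Gamma_0(3))$ followed by a single coefficient match, and fits naturally into the Farkas--Kra setting. Once both are in hand, Theorem \ref{thm-level3-E_2(q^3)} follows from Theorem \ref{thm-level3-E_2(q)} by the substitutions outlined above.
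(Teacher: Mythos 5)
Your proposal is correct, and the algebra checks out (I verified all three substitutions), but it takes a genuinely different route from the paper. The paper proves Theorem \ref{thm-level3-E_2(q^3)} directly, by running the entire theta-function machinery a second time in parallel with the proof of Theorem \ref{thm-level3-E_2(q)}: it replaces the characteristic $\bigl[\begin{smallmatrix}1\\ 1/3\end{smallmatrix}\bigr]$ with $\bigl[\begin{smallmatrix}1/3\\ 1\end{smallmatrix}\bigr]$, substitutes $z=-\tau/3$ instead of $z=-1/3$ into equations (\ref{eqn:theta-derivative-3rd})--(\ref{eqn:theta-derivative-5th}), and finishes with the rescaling $\tau\to 3\tau$. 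Your reduction via the bridges $a^3=b^3+c^3$ and $E_2(q)=3E_2(q^3)-2a^2(q)$ is shorter and turns the second theorem into polynomial bookkeeping, which is a real economy. What the paper's redundancy buys is logical independence: in Section \ref{sec:applications} the author \emph{derives} $a^3=b^3+c^3$ (Theorem \ref{thm:a^3(q)}) by eliminating $q\,da/dq$ between the two ODE systems, so within the paper's architecture your bridge (a) is a downstream consequence of the very theorem you are proving, and using the paper's own proof of it would be circular. You flag this correctly by insisting that (a) be established independently (it is the classical Borwein--Borwein--Garvan identity, quoted with references in Remark 1), so your argument is sound --- but it forfeits the paper's later application. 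Your bridge (b), by contrast, is exactly the paper's Theorem \ref{thm:a^2(q)}, which is proved there from Proposition \ref{prop:1st-derivative} and the Farkas--Kra identity of Theorem \ref{thm:1-1/3} without reference to either ODE system, so you may simply cite it rather than redo the $M_2(\Gamma_0(3))$ dimension count.
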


\par
Section \ref{sec:properties} reviews Farkas and Kra's theory of theta functions with rational characteristics. 
Section \ref{sec:theta-functions} treats some theta functional formulas. In particular, 
we prove equations (\ref{eqn:main:pro-series(1)}) and (\ref{eqn:main:pro-series(2)}). 
Section \ref{sec:proof-preliminary} describes preliminary results. 
Section \ref{sec:proof:level3-E_2(q)} proves Theorem \ref{thm-level3-E_2(q)}. 
In particular, our method recovers Ramanujan's system of ODEs (\ref{eqn:Ramanujan-ODE}).   
Section \ref{sec:proof:level3-E_2(q^3)} proves Theorem \ref{thm-level3-E_2(q^3)}. 
Section \ref{sec:applications} 
expresses $a^2(q), a^3(q),a^4(q), a^5(q)$ and $a^6(q)$ in terms of modular forms and divisor functions.    
Section \ref{sec:Ramanujan-a(q)} shows Ramanujan's identity of $a(q),$ for selected cases that express $a(q)$ by Dedekind's eta functions. 
Section \ref{sec:more-pro-series-level3} derives more product-series identities. 

\subsubsection*{Remark 1}
We note the properties of 
the cubic theta functions;
\begin{align}
a(q)=&a(\tau)=1+6\sum_{n=1}^{\infty} (d_{1,3}(n)-d_{2,3}(n)) q^n,  \notag \\
b^3(q)=&b^3(\tau)=\frac
{
(q;q)_{\infty}^9
}
{
(q^3;q^3)_{\infty}^3
}, \,\,
c^3(q)=c^3(\tau)=27q\frac{(q^3;q^3)_{\infty}^9}{(q;q)_{\infty}^3},   \notag \\
a^3(q)=&b^3(q)+c^3(q), \,\,\,q=\exp(2 \pi i \tau).  \label{eqn:Ramanujan-cubic-intro}
\end{align}
For the proof, readers are referred to the books by Berndt \cite[pp. 79]{Berndt}, Dickson \cite[pp. 68]{Dickson} 
and the paper by Borwein et al. \cite{BBG}.

\subsubsection*{Remark 2}
Let us define the operators
\begin{equation*}
\theta:=q\dfrac{d}{dq}=\frac{1}{2\pi i} \frac{d}{d\tau}, \,\,
\partial:=\partial_k=12 \theta -kE_2(q), \,\,(k=1,2,3,\ldots).
\end{equation*}
The properties of the operators $\theta$ and $\partial$ 
can be found in Lang's book \cite[pp. 159-175]{Lang}. 
The first ODE of Theorem \ref{thm-level3-E_2(q)} implies 
\begin{equation*}
\partial_1 a(q)=3 a^3(q)-4b^3(q).
\end{equation*}
The first ODE of Theorem \ref{thm-level3-E_2(q^3)} was proved in Cooper \cite[pp. 263]{Cooper}. 

\subsubsection*{Remark 3}
Equation (\ref{eqn:Huber-ODE-1}) and Theorem \ref{thm-level3-E_2(q)}
implies that the following differential fields, $(\quad, q d/dq),$ are equal:
\begin{align*}
\mathbb{C}\langle a(q), \mathscr{P}(q), b^3(q) \rangle=&\mathbb{C}\langle a(q), \mathscr{P}(q) \rangle
=\mathbb{C}\langle a(q), b^3(q) \rangle=\mathbb{C}\langle \mathscr{P}(q), b^3(q) \rangle \\
=&\mathbb{C}\langle a(q), E_2(q) \rangle=\mathbb{C}\langle a(q), E_2(q^3)\rangle=\mathbb{C}\langle a(q), E_2(q), b^3(q) \rangle. 
\end{align*}
Moreover, Theorem \ref{thm-E4-E6-level3-q} expresses $E_4(q)$ and $ E_6(q)$ by $a(q)$ and $b^3(q),$ 
which implies that 
\begin{equation*}
\mathbb{C}(E_2(q), E_4(q), E_6(q) )\subset \mathbb{C}(a(q), E_2(q), b^3(q) ). 
\end{equation*}

\subsection*{Acknowledgments}
This work was supported by JSPS KAKENHI Grant Number JP17K14213. 
We are grateful to Professor S. Nishioka for his useful suggestions. 
Moreover, we thank the referee for recommending various improvements to the paper.

\section{Properties of the theta functions}
\label{sec:properties}

\subsection{Definitions}
Following the work of Farkas and Kra \cite{Farkas-Kra}, 
we introduce the {\it theta function with characteristics,} 
which is defined by 
\begin{align*}
\theta 
\left[
\begin{array}{c}
\epsilon \\
\epsilon^{\prime}
\end{array}
\right] (\zeta, \tau) 
=
\theta 
\left[
\begin{array}{c}
\epsilon \\
\epsilon^{\prime}
\end{array}
\right] (\zeta) 
:=&\sum_{n\in\mathbb{Z}} \exp
\left(2\pi i\left[ \frac12\left(n+\frac{\epsilon}{2}\right)^2 \tau+\left(n+\frac{\epsilon}{2}\right)\left(\zeta+\frac{\epsilon^{\prime}}{2}\right) \right] \right), 
\end{align*}
where $\epsilon, \epsilon^{\prime}\in\mathbb{R}, \, \zeta\in\mathbb{C},$ and $\tau\in\mathbb{H}^{2}.$ 
The {\it theta constants} are given by 
\begin{equation*}
\theta 
\left[
\begin{array}{c}
\epsilon \\
\epsilon^{\prime}
\end{array}
\right]
:=
\theta 
\left[
\begin{array}{c}
\epsilon \\
\epsilon^{\prime}
\end{array}
\right] (0, \tau).
\end{equation*}
In particular, note that 
\begin{equation}
\label{eqn:theta-null}
\vartheta_2=
\theta 
\left[
\begin{array}{c}
1 \\
0
\end{array}
\right], \,\,
\vartheta_3=
\theta 
\left[
\begin{array}{c}
0 \\
0
\end{array}
\right], \,\,
\vartheta_4=
\theta 
\left[
\begin{array}{c}
0 \\
1
\end{array}
\right]. 
\end{equation}
Furthermore, 
we denote the derivative coefficients of the theta function by 
\begin{equation*}
\theta^{\prime} 
\left[
\begin{array}{c}
\epsilon \\
\epsilon^{\prime}
\end{array}
\right]
:=\left.
\frac{\partial}{\partial \zeta} 
\theta 
\left[
\begin{array}{c}
\epsilon \\
\epsilon^{\prime}
\end{array}
\right] (\zeta, \tau)
\right|_{\zeta=0}, 
\,
\theta^{\prime \prime} 
\left[
\begin{array}{c}
\epsilon \\
\epsilon^{\prime}
\end{array}
\right]
:=\left.
\frac{\partial^2 }{\partial \zeta^2} 
\theta 
\left[
\begin{array}{c}
\epsilon \\
\epsilon^{\prime}
\end{array}
\right] (\zeta, \tau)
\right|_{\zeta=0},
\end{equation*}
and
\begin{equation*}
\theta^{\prime \prime \prime} 
\left[
\begin{array}{c}
\epsilon \\
\epsilon^{\prime}
\end{array}
\right]
:=\left.
\frac{\partial^3 }{\partial \zeta^3} 
\theta 
\left[
\begin{array}{c}
\epsilon \\
\epsilon^{\prime}
\end{array}
\right] (\zeta, \tau)
\right|_{\zeta=0},\,\,
\theta^{(n)} 
\left[
\begin{array}{c}
\epsilon \\
\epsilon^{\prime}
\end{array}
\right]
:=\left.
\frac{\partial^n}{\partial \zeta^n} 
\theta 
\left[
\begin{array}{c}
\epsilon \\
\epsilon^{\prime}
\end{array}
\right] (\zeta, \tau)
\right|_{\zeta=0}, \,\,\,(n=1,2,3,4,\ldots). 
\end{equation*}
In particular, Jacobi's derivative formula is given by 
\begin{equation}
\label{eqn:Jacobi-derivative}
\theta^{\prime} 
\left[
\begin{array}{c}
1 \\
1
\end{array}
\right] 
=
-\pi 
\theta
\left[
\begin{array}{c}
0 \\
0
\end{array}
\right] 
\theta
\left[
\begin{array}{c}
1 \\
0
\end{array}
\right] 
\theta
\left[
\begin{array}{c}
0 \\
1
\end{array}
\right].  
\end{equation}

\subsection{Basic properties}
We first note that 
for $m,n\in\mathbb{Z},$ 
\begin{equation}
\label{eqn:integer-char}
\theta 
\left[
\begin{array}{c}
\epsilon \\
\epsilon^{\prime}
\end{array}
\right] (\zeta+n+m\tau, \tau) =
\exp(2\pi i)\left[\frac{n\epsilon-m\epsilon^{\prime}}{2}-m\zeta-\frac{m^2\tau}{2}\right]
\theta 
\left[
\begin{array}{c}
\epsilon \\
\epsilon^{\prime}
\end{array}
\right] (\zeta,\tau),
\end{equation}
and 
\begin{equation}
\theta 
\left[
\begin{array}{c}
\epsilon +2m\\
\epsilon^{\prime}+2n
\end{array}
\right] 
(\zeta,\tau)
=\exp(\pi i \epsilon n)
\theta 
\left[
\begin{array}{c}
\epsilon \\
\epsilon^{\prime}
\end{array}
\right] 
(\zeta,\tau).
\end{equation}
Furthermore, 
it is easy to see that 
\begin{equation*}
\theta 
\left[
\begin{array}{c}
-\epsilon \\
-\epsilon^{\prime}
\end{array}
\right] (\zeta,\tau)
=
\theta 
\left[
\begin{array}{c}
\epsilon \\
\epsilon^{\prime}
\end{array}
\right] (-\zeta,\tau)
\,\,
\mathrm{and}
\,\,
\theta^{\prime} 
\left[
\begin{array}{c}
-\epsilon \\
-\epsilon^{\prime}
\end{array}
\right] (\zeta,\tau)
=
-
\theta^{\prime} 
\left[
\begin{array}{c}
\epsilon \\
\epsilon^{\prime}
\end{array}
\right] (-\zeta,\tau).
\end{equation*}
\par
For $m,n\in\mathbb{R},$ 
we see that 
\begin{align}
\label{eqn:real-char}
&\theta 
\left[
\begin{array}{c}
\epsilon \\
\epsilon^{\prime}
\end{array}
\right] \left(\zeta+\frac{n+m\tau}{2}, \tau\right)   \notag\\
&=
\exp(2\pi i)\left[
-\frac{m\zeta}{2}-\frac{m^2\tau}{8}-\frac{m(\epsilon^{\prime}+n)}{4}
\right]
\theta 
\left[
\begin{array}{c}
\epsilon+m \\
\epsilon^{\prime}+n
\end{array}
\right] 
(\zeta,\tau). 
\end{align}
We note that 
$\theta 
\left[
\begin{array}{c}
\epsilon \\
\epsilon^{\prime}
\end{array}
\right] \left(\zeta, \tau\right)$ has only one zero in the fundamental parallelogram, 
which is given by 
$$
\zeta=\frac{1-\epsilon}{2}\tau+\frac{1-\epsilon^{\prime}}{2}. 
$$

\subsection{Jacobi's triple product identity}
All the theta functions have infinite product expansions, which are given by 
\begin{align}
\theta 
\left[
\begin{array}{c}
\epsilon \\
\epsilon^{\prime}
\end{array}
\right] (\zeta, \tau) &=\exp\left(\frac{\pi i \epsilon \epsilon^{\prime}}{2}\right) x^{\frac{\epsilon^2}{4}} z^{\frac{\epsilon}{2}}    \notag  \\
                           &\quad 
                           \displaystyle \times\prod_{n=1}^{\infty}(1-x^{2n})(1+e^{\pi i \epsilon^{\prime}} x^{2n-1+\epsilon} z)(1+e^{-\pi i \epsilon^{\prime}} x^{2n-1-\epsilon}/z),  \label{eqn:Jacobi-triple}
\end{align}
where $x=\exp(\pi i \tau)$ and $z=\exp(2\pi i \zeta).$ 
Therefore, it follows from Jacobi's derivative formula (\ref{eqn:Jacobi-derivative}) that 
\begin{equation*}
\label{eqn:Jacobi}
\theta^{\prime} 
\left[
\begin{array}{c}
1 \\
1
\end{array}
\right](0,\tau) 
=
-2\pi 
q^{\frac18}
\prod_{n=1}^{\infty}(1-q^n)^3, \,\,q=\exp(2\pi i \tau). 
\end{equation*}

\subsection{Spaces of $N$-th order $\theta$-functions}

Based on the results of Farkas and Kra \cite{Farkas-Kra}, 
we define 
$\mathcal{F}_{N}\left[
\begin{array}{c}
\epsilon \\
\epsilon^{\prime}
\end{array}
\right] $ to be the set of entire functions $f$ that satisfy the two functional equations, 
$$
f(\zeta+1)=\exp(\pi i \epsilon) \,\,f(\zeta),
$$
and 
$$
f(\zeta+\tau)=\exp(-\pi i)[\epsilon^{\prime}+2N\zeta+N\tau] \,\,f(\zeta), \quad \zeta\in\mathbb{C},  \,\,\tau \in\mathbb{H}^2,
$$ 
where 
$N$ is a positive integer and 
$\left[
\begin{array}{c}
\epsilon \\
\epsilon^{\prime}
\end{array}
\right] \in\mathbb{R}^2.$ 
This set of functions is referred to as the space of {\it $N$-th order $\theta$-functions with characteristic}
$\left[
\begin{array}{c}
\epsilon \\
\epsilon^{\prime}
\end{array}
\right]. $ 
Note that 
$$
\dim \mathcal{F}_{N}\left[
\begin{array}{c}
\epsilon \\
\epsilon^{\prime}
\end{array}
\right] =N.
$$
The proof of this space was reported by Farkas and Kra \cite[pp.133]{Farkas-Kra}.

\subsection{The heat equation}
The theta function satisfies the following heat equation: 
\begin{equation}
\label{eqn:heat}
\frac{\partial^2}{\partial \zeta^2}
\theta
\left[
\begin{array}{c}
\epsilon \\
\epsilon^{\prime}
\end{array}
\right](\zeta,\tau)
=
4\pi i
\frac{\partial}{\partial \tau}
\theta
\left[
\begin{array}{c}
\epsilon \\
\epsilon^{\prime}
\end{array}
\right](\zeta,\tau). 
\end{equation}

\section{Some theta functional formulas}
\label{sec:theta-functions}

We introduce Weierstrass' $\wp$-function and $\sigma$-function:
\begin{align*}
\wp(z;\omega_1, \omega_2)=\wp(z)
=&
\frac{1}{z^2}+\sum_
{
\tiny{
\begin{matrix}
(m,n)\in\mathbb{Z}^2 \\ 
(m,n)\neq(0,0)
\end{matrix}
}
}
\left(
\frac{1}{(z-m\omega_1-n\omega_2)^2}-\frac{1}{(m\omega_1+n\omega_2)^2}
\right),   \\
\sigma(z;\omega_1, \omega_2)=\sigma(z)
=&
z
\prod_
{
\tiny{
\begin{matrix}
(m,n)\in\mathbb{Z}^2 \\ 
(m,n)\neq(0,0)
\end{matrix}
}
}\left( 1-\frac{z}{m\omega_1+n\omega_2}  \right)
\exp
\left(
\frac{z}{m\omega_1+n\omega_2}
+
\frac{z^2}{2(m\omega_1+n\omega_2)^2}
\right),
\end{align*}
where $z,\omega_1,\omega_2$ are complex numbers with $\omega_2/ \omega_1 \not\in\mathbb{R}.$
\par
From Whittaker and Watson \cite[pp. 437, 459]{WW}, we recall the following formulas:
\begin{equation}
\frac{\sigma(4z) }{\sigma^4(z)}=-\wp^{\prime}(z),  \,\,
\frac{\sigma(3z) }{\sigma^9(z)}=3\wp(z)\wp^{\prime}(z)^2-\frac14 \wp^{\prime \prime}(z)^2,
\end{equation}
and
\begin{align*}
\wp^{\prime}(z)^2=&4\wp(z)^3-g_2\wp(z)-g_3,  \\
g_2(\omega_1, \omega_2)=&60
\sum_
{
\tiny{
\begin{matrix}
(m,n)\in\mathbb{Z}^2 \\ 
(m,n)\neq(0,0)
\end{matrix}
}
}
\frac{1}{(m\omega_1+n\omega_2)^4},  \,\,
g_3(\omega_1, \omega_2)=140
\sum_
{
\tiny{
\begin{matrix}
(m,n)\in\mathbb{Z}^2 \\ 
(m,n)\neq(0,0)
\end{matrix}
}
}
\frac{1}{(m\omega_1+n\omega_2)^6}, 
\end{align*}
which implies 
\begin{equation}
\wp^{\prime \prime}(z)=6\wp^2(z)-\frac12g_2, \,\,
\wp^{\prime \prime \prime}(z)=12\wp(z) \wp^{\prime}(z).
\end{equation}
Moreover, from Farkas and Kra \cite[pp. 124]{Farkas-Kra}, we recall 
\begin{align*}
\wp(z;1, \tau)=&
\frac{1}{3}
\frac
{
\theta^{\prime  \prime \prime} 
\left[
\begin{array}{c}
1 \\
1
\end{array}
\right]
}
{
\theta^{\prime } 
\left[
\begin{array}{c}
1 \\
1
\end{array}
\right]
}-
\frac{d^2}{dz^2} \log 
\theta
\left[
\begin{array}{c}
1 \\
1
\end{array}
\right](z,\tau),  \\
\sigma(z;\omega_1, \omega_2)=&
\exp
\left(
\frac{\eta_1 z^2}{2\omega_1}
\right)
\frac{\omega_1}
{ 
\theta^{\prime}
\left[
\begin{array}{c}
1 \\
1
\end{array}
\right] }
\theta
\left[
\begin{array}{c}
1 \\
1
\end{array}
\right]
\left(
\frac{z}{\omega_1},
\tau
\right),
\end{align*}
where $\omega_2/\omega_1=\tau\in\mathbb{H}^2.$ 
\par
Therefore, we obtain the following theta functional formulas:

\begin{theorem}
\label{theta-derivative }
{\it
For every $z\in\mathbb{C},$ we have
\begin{equation}
\label{eqn:theta-derivative-3rd}
\frac{d^3}{dz^3}\log 
\theta
\left[
\begin{array}{c}
1\\
1
\end{array}
\right](z)
=
\theta^{\prime}
\left[
\begin{array}{c}
1\\
1
\end{array}
\right]^3
\frac
{
\theta
\left[
\begin{array}{c}
1\\
1
\end{array}
\right](2z)
}
{
\theta^4
\left[
\begin{array}{c}
1\\
1
\end{array}
\right](z)
},
\end{equation}
\begin{align}
\frac
{
\theta^{\prime}
\left[
\begin{array}{c}
1\\
1
\end{array}
\right]^8
\theta
\left[
\begin{array}{c}
1\\
1
\end{array}
\right](3z)
}
{
\theta^9
\left[
\begin{array}{c}
1\\
1
\end{array}
\right](z)
}
=&
3
\left\{
\frac{1}{3}
\frac
{
\theta^{\prime  \prime \prime} 
\left[
\begin{array}{c}
1 \\
1
\end{array}
\right]
}
{
\theta^{\prime } 
\left[
\begin{array}{c}
1 \\
1
\end{array}
\right]
}-
\frac{d^2}{dz^2} \log 
\theta
\left[
\begin{array}{c}
1 \\
1
\end{array}
\right](z)
\right\}
\left\{
\frac
{
\theta^{\prime}
\left[
\begin{array}{c}
1\\
1
\end{array}
\right]^3
\theta
\left[
\begin{array}{c}
1\\
1
\end{array}
\right](2z)
}
{
\theta^4
\left[
\begin{array}{c}
1\\
1
\end{array}
\right](z)
}
\right\}^2    \notag    \\
&\quad
-\frac14
\left\{
\frac{d^4}{dz^4} \log 
\theta
\left[
\begin{array}{c}
1 \\
1
\end{array}
\right](z)
\right\}^2
\label{eqn:theta-derivative-4th}
\end{align}
and
\begin{equation}
\label{eqn:theta-derivative-5th}
\frac{d^5}{dz^5}\log 
\theta
\left[
\begin{array}{c}
1\\
1
\end{array}
\right](z)
=
12
\left\{
\frac{1}{3}
\frac
{
\theta^{\prime  \prime \prime} 
\left[
\begin{array}{c}
1 \\
1
\end{array}
\right]
}
{
\theta^{\prime } 
\left[
\begin{array}{c}
1 \\
1
\end{array}
\right]
}-
\frac{d^2}{dz^2} \log 
\theta
\left[
\begin{array}{c}
1 \\
1
\end{array}
\right](z)
\right\}
\frac
{
\theta^{\prime}
\left[
\begin{array}{c}
1\\
1
\end{array}
\right]^3
\theta
\left[
\begin{array}{c}
1\\
1
\end{array}
\right](2z)
}
{
\theta^4
\left[
\begin{array}{c}
1\\
1
\end{array}
\right](z)
}. 
\end{equation}
}
\end{theorem}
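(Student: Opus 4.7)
My plan is to derive each of (\ref{eqn:theta-derivative-3rd})--(\ref{eqn:theta-derivative-5th}) as an algebraic consequence of the two classical Weierstrass identities
\[
\frac{\sigma(2z)}{\sigma(z)^4}=-\wp'(z),\qquad \frac{\sigma(3z)}{\sigma(z)^9}=3\wp(z)\wp'(z)^2-\frac14\wp''(z)^2,
\]
together with the Farkas--Kra representations of $\sigma$ and $\wp$ in terms of $\theta\left[\begin{array}{c}1\\1\end{array}\right]$ recalled just above.

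The first step is to rewrite the left-hand sides $\sigma(nz)/\sigma(z)^{n^2}$ for $n=2,3$ as ratios of $\theta$-values. Substituting
\[
\sigma(z;1,\tau)=\exp\!\left(\frac{\eta_1 z^2}{2}\right)\frac{\theta\left[\begin{array}{c}1\\1\end{array}\right]\!(z,\tau)}{\theta'\left[\begin{array}{c}1\\1\end{array}\right]},
\]
the Gaussian factor contributes $\exp(\eta_1 n^2 z^2/2)$ in both the numerator and the denominator and so cancels, while the constants $1/\theta'\left[\begin{array}{c}1\\1\end{array}\right]$ combine into a global factor $\theta'\left[\begin{array}{c}1\\1\end{array}\right]^{n^2-1}$, giving
\[
\frac{\sigma(2z)}{\sigma(z)^4}=\frac{\theta'\left[\begin{array}{c}1\\1\end{array}\right]^3\,\theta\left[\begin{array}{c}1\\1\end{array}\right]\!(2z)}{\theta\left[\begin{array}{c}1\\1\end{array}\right]\!(z)^4},\qquad \frac{\sigma(3z)}{\sigma(z)^9}=\frac{\theta'\left[\begin{array}{c}1\\1\end{array}\right]^8\,\theta\left[\begin{array}{c}1\\1\end{array}\right]\!(3z)}{\theta\left[\begin{array}{c}1\\1\end{array}\right]\!(z)^9}.
\]

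The second step is to express $\wp',\wp'',\wp'''$ as log-derivatives of $\theta\left[\begin{array}{c}1\\1\end{array}\right]\!(z)$. Differentiating the Farkas--Kra formula for $\wp(z;1,\tau)$ in $z$ kills the constant term $\tfrac13\theta'''/\theta'$ and leaves
\[
\wp'(z)=-\frac{d^3}{dz^3}\log\theta\left[\begin{array}{c}1\\1\end{array}\right]\!(z),\quad \wp''(z)=-\frac{d^4}{dz^4}\log\theta\left[\begin{array}{c}1\\1\end{array}\right]\!(z),\quad \wp'''(z)=-\frac{d^5}{dz^5}\log\theta\left[\begin{array}{c}1\\1\end{array}\right]\!(z).
\]
Inserting these into $\sigma(2z)/\sigma(z)^4=-\wp'(z)$ immediately yields (\ref{eqn:theta-derivative-3rd}). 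Substituting into $\sigma(3z)/\sigma(z)^9=3\wp\wp'^2-\tfrac14\wp''^2$ and then using (\ref{eqn:theta-derivative-3rd}) itself to re-express the factor $\wp'(z)^2$ as a $\theta$-quotient produces (\ref{eqn:theta-derivative-4th}) in exactly the stated form. Finally, the Weierstrass identity $\wp'''=12\wp\wp'$ (recorded just before the theorem) translated through the same dictionary gives (\ref{eqn:theta-derivative-5th}); the two minus signs coming from $\wp'$ and $\wp'''$ cancel, leaving the positive coefficient $12$.

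The only mildly delicate point is confirming that the quasi-period $\eta_1$ disappears on both sides---from $\sigma(nz)/\sigma(z)^{n^2}$ by the Gaussian cancellation above, and from $\wp',\wp'',\wp'''$ because the additive constant in $\wp$ is annihilated by the first further differentiation---together with tracking signs when reusing (\ref{eqn:theta-derivative-3rd}) inside (\ref{eqn:theta-derivative-4th}). Beyond that bookkeeping the three formulas follow by direct substitution.
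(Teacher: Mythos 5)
Your proposal is correct and follows essentially the same route as the paper, which states the theorem as a direct consequence ("Therefore, we obtain...") of exactly the ingredients you use: the Weierstrass duplication and triplication formulas for $\sigma(nz)/\sigma(z)^{n^2}$, the relation $\wp'''=12\wp\wp'$, and the Farkas--Kra theta representations of $\wp$ and $\sigma$. You have merely written out the bookkeeping the paper leaves implicit (and, in passing, you correctly read the paper's $\sigma(4z)/\sigma^4(z)$ as the duplication formula $\sigma(2z)/\sigma^4(z)=-\wp'(z)$).
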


\begin{corollary}
\label{coro:pro-series-3order}
{\it
For every $\tau\in\mathbb{H}^2,$ we have 
\begin{equation}
\frac{\eta^9(\tau)}{\eta^3(3\tau)}
=
1-9\sum_{n=1}^{\infty}
q^n \left( \sum_{d|n}d^2 \left( \frac{d}{3} \right)  \right)
\end{equation}
and
\begin{equation}
\frac{\eta^9(3\tau)}{\eta^3(\tau)}
=
\sum_{n=1}^{\infty}
q^n \left(  \sum_{d|n}d^2 \left( \frac{n/d}{3} \right)  \right),
\end{equation}
where $q=\exp(2\pi i \tau).$ 
}
\end{corollary}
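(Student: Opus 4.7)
The plan is to specialize the identity (\ref{eqn:theta-derivative-3rd}) at $z=1/3$ and match the resulting Lambert series to the divisor sum on the right-hand side. Throughout, I write $\theta_{11}(z,\tau)$ as shorthand for $\theta\!\left[\begin{array}{c}1\\1\end{array}\right]\!(z,\tau)$.

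For the first identity, I would first expand the left-hand side of (\ref{eqn:theta-derivative-3rd}) as a Lambert series in $q$. Starting from the Jacobi triple product $\theta_{11}(z,\tau)=-2\sin(\pi z)q^{1/8}\prod_{n\ge 1}(1-q^n)(1-q^n e^{2\pi iz})(1-q^n e^{-2\pi iz})$, taking a logarithm and differentiating three times in $z$ yields
\begin{equation*}
\frac{d^3}{dz^3}\log\theta_{11}(z,\tau)=2\pi^3\frac{\cos(\pi z)}{\sin^3(\pi z)}-16\pi^3\sum_{k=1}^{\infty}\frac{k^2q^k\sin(2\pi kz)}{1-q^k}.
\end{equation*}
Setting $z=1/3$ and using $\sin(2\pi k/3)=\tfrac{\sqrt 3}{2}\!\left(\tfrac{k}{3}\right)$ turns the Lambert sum into $-8\sqrt{3}\pi^3\sum_{N\ge 1}q^N\sum_{d\mid N}d^2\!\left(\tfrac{d}{3}\right)$, while the leading cotangent-like term contributes the constant $8\pi^3/(3\sqrt 3)$.

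Next I would compute the right-hand side of (\ref{eqn:theta-derivative-3rd}) at $z=1/3$. With $\omega=e^{2\pi i/3}$, the factorization $(1-q^n)(1-q^n\omega)(1-q^n\omega^2)=1-q^{3n}$ collapses the Jacobi triple product for $\theta_{11}(1/3,\tau)$ and $\theta_{11}(2/3,\tau)$ down to $(q^3;q^3)_\infty$, giving closed forms whose $q$-dependence is carried entirely by $(q^3;q^3)_\infty$ and whose constants are elementary cube roots of unity. Simplifying with $(1-\omega)(1-\omega^2)=3$ and $(1-\omega)^3=-3\sqrt{3}\,i$, and combining with $\theta'_{11}(0,\tau)^3=-8\pi^3 q^{3/8}(q;q)_\infty^9$ obtained from Jacobi's derivative formula (\ref{eqn:Jacobi-derivative}), the right-hand side reduces to $8\pi^3(q;q)_\infty^9/(3\sqrt 3(q^3;q^3)_\infty^3)$. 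Equating the two sides and dividing by $8\pi^3/(3\sqrt 3)$ delivers the first identity.

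For the second identity I would run the same argument at the dual evaluation point $z=\tau/3$: the triple product for $\theta_{11}(\tau/3,\tau)$ and $\theta_{11}(2\tau/3,\tau)$ rearranges, after splitting residues $\bmod\,3$, into $(q^{1/3};q^{1/3})_\infty$ with an explicit prefactor, and the same Lambert expansion of $d^3/dz^3\log\theta_{11}$ specializes with the roles of $\tau$ and $1$ interchanged; the modular substitution $\tau\mapsto-1/(3\tau)$ together with $\eta(-1/\tau)=\sqrt{-i\tau}\,\eta(\tau)$ then clears the fractional $q$-powers and converts the Legendre character into its Dirichlet-convolution partner, producing $\sum_{d\mid N}d^2\!\left(\tfrac{N/d}{3}\right)$. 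The principal technical obstacle throughout is the bookkeeping of cube roots of unity and, in the second identity, of the fractional $q$-powers appearing before modular transformation; once those constants are handled correctly, the structural identification of the Lambert series with the claimed divisor sums is immediate.
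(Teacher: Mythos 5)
Your strategy is the same as the paper's: substitute $z=\pm1/3$ and $z=\pm\tau/3$ into equation (\ref{eqn:theta-derivative-3rd}) and evaluate both sides with Jacobi's triple product (\ref{eqn:Jacobi-triple}) and the derivative formula (\ref{eqn:Jacobi-derivative}). Your proof of the first identity is complete and correct: the Lambert expansion, the evaluation $\sin(2\pi k/3)=\frac{\sqrt{3}}{2}\left(\frac{k}{3}\right)$, the collapse $(1-q^n)(1-q^n\omega)(1-q^n\omega^2)=1-q^{3n}$, and the common constant $8\pi^3/(3\sqrt{3})$ all check.

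The gap is in the mechanism you propose for the second identity. The substitution $\tau\mapsto-1/(3\tau)$ is neither needed nor usable. It is not needed because the direct evaluation at $z=\tau/3$ already yields the co-divisor character: with $y=q^{1/3}$ one has $\sin(2\pi k\tau/3)=\frac{1}{2i}(y^{k}-y^{-k})$, so your Lambert term equals $8\pi^3 i\sum_{k}k^2(y^{4k}-y^{2k})/(1-y^{3k})$; expanding each summand geometrically in $y^{3k}$ sorts the exponents $km$ by $m\bmod 3$ with signs $\left(\frac{m}{3}\right)$ for $m\ge 2$, and the elementary term $2\pi^3\cos(\pi\tau/3)/\sin^3(\pi\tau/3)=8\pi^3 i\,y(1+y)/(1-y)^3=8\pi^3 i\sum_k k^2y^k$ supplies precisely the missing stratum $m=1$. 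This gives $8\pi^3 i\sum_N y^N\sum_{d\mid N}d^2\left(\frac{N/d}{3}\right)$, while the product side works out to $8\pi^3 i\,\eta^9(\tau)/\eta^3(\tau/3)$; the harmless rescaling $\tau\to 3\tau$ (i.e.\ $y\to q$), not a modular inversion, then finishes. It is not usable because transporting the first identity through the Fricke map would require the transformation law of the weight-three Eisenstein series $1-9\sum_n q^n\sum_{d\mid n}d^2\left(\frac{d}{3}\right)$ under that map, which is not available in the paper and is essentially equivalent to what you are proving; moreover $\eta(\tau/3)$ is sent to a multiple of $\eta(9\tau)$ under $\tau\mapsto-1/(3\tau)$, so that map does not clear the fractional powers but instead introduces level $9$. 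Replace that step with the direct $y$-expansion above and your argument is complete.
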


\begin{proof}
The corollary can be proved by substituting 
$z=-1/3$ or $z=-\tau/3$ in equation (\ref{eqn:theta-derivative-3rd}) 
and applying Jacobi's triple product identity (\ref{eqn:Jacobi-triple}). 
\end{proof}

\section{Preliminary results}
\label{sec:proof-preliminary}

\begin{proposition}
\label{prop:1st-derivative}
{\it
For every $\tau \in\mathbb{H}^2,$ we have 
\begin{equation}
\frac{
\theta^{\prime}
\left[
\begin{array}{c}
1 \\
\frac13
\end{array}
\right]  
}
{ 
\theta
\left[
\begin{array}{c}
1 \\
\frac13
\end{array}
\right]  
}=
-\frac{\pi}{\sqrt{3}} a(\tau), \,\,\mathit{and} \,\,
\frac{
\theta^{\prime}
\left[
\begin{array}{c}
\frac13 \\
1
\end{array}
\right]  
}
{ 
\theta
\left[
\begin{array}{c}
\frac13 \\
1
\end{array}
\right]  
}=
\frac{\pi i}{3} a(\tau/3). 
\end{equation}
}
\end{proposition}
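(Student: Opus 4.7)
The plan is to derive both identities from the logarithmic derivative (in $\zeta$) of Jacobi's triple product (\ref{eqn:Jacobi-triple}). Differentiating the product representation term by term and setting $\zeta=0$ yields the master Lambert-type formula
\begin{equation*}
\frac{\theta^{\prime}\left[\begin{array}{c}\epsilon\\ \epsilon^{\prime}\end{array}\right]}{\theta\left[\begin{array}{c}\epsilon\\ \epsilon^{\prime}\end{array}\right]}
= \pi i\,\epsilon + 2\pi i\sum_{n=1}^{\infty}\left[\frac{e^{\pi i\epsilon^{\prime}}x^{2n-1+\epsilon}}{1+e^{\pi i\epsilon^{\prime}}x^{2n-1+\epsilon}} - \frac{e^{-\pi i\epsilon^{\prime}}x^{2n-1-\epsilon}}{1+e^{-\pi i\epsilon^{\prime}}x^{2n-1-\epsilon}}\right],
\end{equation*}
with $x=e^{\pi i\tau}$; this is the workhorse for both cases. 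I would also use the Lambert-series form of $a$ equivalent to Remark 1, namely $a(\tau)=1+6\sum_{m\geq 1}\left(\frac{m}{3}\right) q^m/(1-q^m)$, obtained from the standard identity $\sum_{m\geq 1}f(m)q^m/(1-q^m)=\sum_{N\geq 1}q^N\sum_{d|N}f(d)$.

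For the first identity, with $[\epsilon,\epsilon^{\prime}]=[1,1/3]$, the powers $x^{2n-1\pm 1}$ become $q^n$ and $q^{n-1}$, and the phases become $\xi:=e^{\pi i/3}$ and $\xi^{-1}$. I would isolate the $n=1$ boundary term of the second sum (which contains $q^0=1$), reindex the remaining tail by $m=n-1$, and combine the two sums term by term. Using $\xi+\xi^{-1}=1$ and $\xi-\xi^{-1}=i\sqrt{3}$, the paired summand collapses to $i\sqrt{3}\,q^n(1-q^n)/(1-q^{3n})$. The extracted boundary contribution $-2\pi i/(\xi+1)$ combines with the prefactor $\pi i\,\epsilon=\pi i$; the elementary identity $\xi+1=\sqrt{3}\,e^{\pi i/6}$ makes this combination collapse to exactly $-\pi/\sqrt{3}$. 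Finally, expanding $q^n(1-q^n)/(1-q^{3n})=\sum_{k\geq 0}(q^{n(3k+1)}-q^{n(3k+2)})$ and grouping by $N=nd$ with $d\not\equiv 0\pmod{3}$ identifies the coefficient of $q^N$ as $d_{1,3}(N)-d_{2,3}(N)$, which reproduces $a(\tau)$ by Remark 1.

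For the second identity, with $[\epsilon,\epsilon^{\prime}]=[1/3,1]$, the phases collapse to $e^{\pm\pi i}=-1$, so both denominators become $1-$ instead of $1+$, and the powers become $q^{n-2/3}$ and $q^{n-1/3}$. Setting $u=q^{1/3}$, these rewrite as $u^{3n-2}$ and $u^{3n-1}$, and since $m=3n-2$ and $m=3n-1$ range over exactly the residues $m\equiv 1,2\pmod{3}$, the bracketed series becomes $\sum_{m\geq 1}\left(\frac{m}{3}\right)u^m/(1-u^m)$. By the Lambert-series form of $a$ applied at $\tau/3$, this sum equals $(a(\tau/3)-1)/6$; combining with the prefactor $\pi i\,\epsilon=\pi i/3$ yields $(\pi i/3)\,a(\tau/3)$, with no boundary correction needed.

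The main obstacle is the bookkeeping in the first case: the isolated $n=1$ term $-2\pi i/(\xi+1)$ and the constant $\pi i$ must combine exactly into the prefactor $-\pi/\sqrt{3}$, which hinges on the phase identity $\xi+1=\sqrt{3}\,e^{\pi i/6}$ and could easily be miscomputed. The second case is structurally cleaner because $e^{\pm\pi i}=-1$ produces no $q^0$ boundary term and the Lambert series for $a(\tau/3)$ assembles directly.
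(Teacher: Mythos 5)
Your proof is correct and takes exactly the route the paper indicates: the paper's own proof is the one-line statement that the proposition ``follows from Jacobi's triple product identity (\ref{eqn:Jacobi-triple})'', and your argument is precisely the detailed execution of that logarithmic-derivative computation (I checked the boundary-term cancellation $\pi i - 2\pi i/(\xi+1) = -\pi/\sqrt{3}$ and the collapse of the paired summand to $i\sqrt{3}\,q^{n}(1-q^{n})/(1-q^{3n})$; both are right). No discrepancy to report.
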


\begin{proof}
The proposition follows from Jacobi's triple product identity (\ref{eqn:Jacobi-triple}). 
\end{proof}

\begin{proposition}
\label{application-residue-theorem}
{\it
For every $\tau \in\mathbb{H}^2,$ we have 
\begin{equation}
\label{eqn:relation-for-thm-1-1/3}
3
\frac{
\theta^{\prime\prime}
\left[
\begin{array}{c}
1 \\
\frac13
\end{array}
\right]
}
{
\theta
\left[
\begin{array}{c}
1 \\
\frac13
\end{array}
\right]
}
-
\frac
{
\theta^{\prime\prime \prime}
\left[
\begin{array}{c}
1 \\
1
\end{array}
\right]
}
{
\theta^{\prime}
\left[
\begin{array}{c}
1 \\
1
\end{array}
\right]
}
+
6
\left\{
\frac
{
\theta^{\prime} 
\left[
\begin{array}{c}
1 \\
\frac13
\end{array}
\right]
}
{
\theta
\left[
\begin{array}{c}
1 \\
\frac13
\end{array}
\right]
}
\right\}^2=0,
\end{equation}
 \begin{equation}
\label{eqn:relation-for-thm-1/3-1}
3
\frac{
\theta^{\prime\prime}
\left[
\begin{array}{c}
\frac13 \\
1
\end{array}
\right]
}
{
\theta
\left[
\begin{array}{c}
\frac13 \\
1
\end{array}
\right]
}
-
\frac
{
\theta^{\prime\prime \prime}
\left[
\begin{array}{c}
1 \\
1
\end{array}
\right]
}
{
\theta^{\prime}
\left[
\begin{array}{c}
1 \\
1
\end{array}
\right]
}
+
6
\left\{
\frac
{
\theta^{\prime} 
\left[
\begin{array}{c}
\frac13 \\
1
\end{array}
\right]
}
{
\theta
\left[
\begin{array}{c}
\frac13 \\
1
\end{array}
\right]
}
\right\}^2=0, 
\end{equation}
 \begin{equation}
\label{eqn:relation-for-thm-1/3-1/3}
3
\frac{
\theta^{\prime\prime}
\left[
\begin{array}{c}
\frac13 \\
\frac13
\end{array}
\right]
}
{
\theta
\left[
\begin{array}{c}
\frac13 \\
\frac13
\end{array}
\right]
}
-
\frac
{
\theta^{\prime\prime \prime}
\left[
\begin{array}{c}
1 \\
1
\end{array}
\right]
}
{
\theta^{\prime}
\left[
\begin{array}{c}
1 \\
1
\end{array}
\right]
}
+
6
\left\{
\frac
{
\theta^{\prime} 
\left[
\begin{array}{c}
\frac13 \\
\frac13
\end{array}
\right]
}
{
\theta
\left[
\begin{array}{c}
\frac13 \\
\frac13
\end{array}
\right]
}
\right\}^2=0,
\end{equation}
and
 \begin{equation}
\label{eqn:relation-for-thm-1/3-5/3}
3
\frac{
\theta^{\prime\prime}
\left[
\begin{array}{c}
\frac13 \\
\frac53
\end{array}
\right]
}
{
\theta
\left[
\begin{array}{c}
\frac13 \\
\frac53
\end{array}
\right]
}
-
\frac
{
\theta^{\prime\prime \prime}
\left[
\begin{array}{c}
1 \\
1
\end{array}
\right]
}
{
\theta^{\prime}
\left[
\begin{array}{c}
1 \\
1
\end{array}
\right]
}
+
6
\left\{
\frac
{
\theta^{\prime} 
\left[
\begin{array}{c}
\frac13 \\
\frac53
\end{array}
\right]
}
{
\theta
\left[
\begin{array}{c}
\frac13 \\
\frac53
\end{array}
\right]
}
\right\}^2=0.
\end{equation}
}
\end{proposition}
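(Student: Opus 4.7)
The four identities have the same structure, so I would handle them uniformly, and the proposition's name points to the mechanism: the residue theorem on the torus $\mathbb{C}/(\mathbb{Z}+\tau\mathbb{Z})$. For each characteristic $\left[\begin{array}{c}\epsilon\\\epsilon'\end{array}\right]$ appearing in the statement, I would introduce the meromorphic function
\[
\phi(\zeta)\;:=\;\frac{\theta\!\left[\begin{array}{c}\epsilon\\\epsilon'\end{array}\right]^{3}(\zeta,\tau)}{\theta\!\left[\begin{array}{c}1\\1\end{array}\right]^{3}(\zeta,\tau)}.
\]
The plan is to show that $\phi$ is elliptic with a single triple pole at $\zeta=0$, and that its residue there is a nonzero constant times the left-hand side of the desired identity; the residue theorem then forces that residue to vanish.

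First I would verify ellipticity using (\ref{eqn:integer-char}): a direct calculation gives $\phi(\zeta+1)/\phi(\zeta)=-e^{3\pi i\epsilon}$ and $\phi(\zeta+\tau)/\phi(\zeta)=-e^{-3\pi i\epsilon'}$, and for every one of the four listed characteristics both $3\epsilon$ and $3\epsilon'$ are odd integers, so both multipliers equal $1$. The zero-location formula of Section~\ref{sec:properties} places the unique zero of $\theta\!\left[\begin{array}{c}1\\1\end{array}\right]$ in the fundamental parallelogram at $\zeta=0$, while the zero of $\theta\!\left[\begin{array}{c}\epsilon\\\epsilon'\end{array}\right]$ is a nontrivial three-torsion point; in particular $\theta\!\left[\begin{array}{c}\epsilon\\\epsilon'\end{array}\right](0)\neq 0$, so $\phi$ has a single triple pole at $\zeta=0$. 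Inserting the Taylor expansions
\[
\theta\!\left[\begin{array}{c}1\\1\end{array}\right](\zeta)=\theta'\!\left[\begin{array}{c}1\\1\end{array}\right]\zeta+\tfrac{1}{6}\theta'''\!\left[\begin{array}{c}1\\1\end{array}\right]\zeta^{3}+O(\zeta^{5})
\]
(odd in $\zeta$) and $\theta\!\left[\begin{array}{c}\epsilon\\\epsilon'\end{array}\right](\zeta)=\theta\!\left[\begin{array}{c}\epsilon\\\epsilon'\end{array}\right]+\theta'\!\left[\begin{array}{c}\epsilon\\\epsilon'\end{array}\right]\zeta+\tfrac{1}{2}\theta''\!\left[\begin{array}{c}\epsilon\\\epsilon'\end{array}\right]\zeta^{2}+O(\zeta^{3})$, cubing and dividing, and extracting the coefficient of $\zeta^{-1}$ yields
\[
\mathrm{Res}_{\zeta=0}\,\phi\;=\;\frac{\theta\!\left[\begin{array}{c}\epsilon\\\epsilon'\end{array}\right]^{3}}{2\,\theta'\!\left[\begin{array}{c}1\\1\end{array}\right]^{3}}\!\left\{3\,\frac{\theta''\!\left[\begin{array}{c}\epsilon\\\epsilon'\end{array}\right]}{\theta\!\left[\begin{array}{c}\epsilon\\\epsilon'\end{array}\right]}-\frac{\theta'''\!\left[\begin{array}{c}1\\1\end{array}\right]}{\theta'\!\left[\begin{array}{c}1\\1\end{array}\right]}+6\!\left(\frac{\theta'\!\left[\begin{array}{c}\epsilon\\\epsilon'\end{array}\right]}{\theta\!\left[\begin{array}{c}\epsilon\\\epsilon'\end{array}\right]}\right)^{\!2}\right\}.
\]
Setting this to zero and dividing out the nonzero prefactor reproduces each of (\ref{eqn:relation-for-thm-1-1/3})--(\ref{eqn:relation-for-thm-1/3-5/3}) exactly.

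The main obstacle, such as it is, lies in the first step: the exponent $3$ in the definition of $\phi$ is tuned precisely so that both quasi-periodicity multipliers $-e^{3\pi i\epsilon}$ and $-e^{-3\pi i\epsilon'}$ trivialize simultaneously for third-order characteristics $\epsilon,\epsilon'\in\{\tfrac{1}{3},1,\tfrac{5}{3}\}$, and no smaller exponent nor an alternative denominator will work for all four cases at once. Once $\phi$ is elliptic with a single pole, the residue computation is essentially mechanical, and the interplay of the odd Taylor series of $\theta\!\left[\begin{array}{c}1\\1\end{array}\right]$ with the generic Taylor series of $\theta\!\left[\begin{array}{c}\epsilon\\\epsilon'\end{array}\right]$ produces the exact linear combination appearing in the statement.
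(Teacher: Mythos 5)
Your proposal is correct and follows essentially the same route as the paper: the paper's proof also forms the elliptic functions $\varphi_j(z)=\theta^3\left[\begin{smallmatrix}\epsilon\\ \epsilon'\end{smallmatrix}\right](z)\big/\theta^3\left[\begin{smallmatrix}1\\ 1\end{smallmatrix}\right](z)$, observes that the only pole in the fundamental parallelogram is the one at $z=0$, and concludes from the residue theorem that the residue there vanishes. Your write-up simply makes explicit the ellipticity check and the Taylor-expansion computation of that residue, both of which the paper leaves to the reader, and both of which you carry out correctly.
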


\begin{proof}
Consider the following elliptic functions:
\begin{equation*}
\varphi_1(z)
=
\frac
{
\theta^3
\left[
\begin{array}{c}
1 \\
\frac13
\end{array}
\right](z) 
}
{
\theta^3
\left[
\begin{array}{c}
1 \\
1
\end{array}
\right](z)
}, \quad 
\varphi_2(z)
=
\frac
{
\theta^3
\left[
\begin{array}{c}
\frac13 \\
1
\end{array}
\right](z) 
}
{
\theta^3
\left[
\begin{array}{c}
1 \\
1
\end{array}
\right](z)
}, 
\quad 
\varphi_3(z)
=
\frac
{
\theta^3
\left[
\begin{array}{c}
\frac13 \\
\frac13
\end{array}
\right](z) 
}
{
\theta^3
\left[
\begin{array}{c}
1 \\
1
\end{array}
\right](z)
}, 
\quad 
\varphi_4(z)
=
\frac
{
\theta^3
\left[
\begin{array}{c}
\frac13 \\
\frac53
\end{array}
\right](z) 
}
{
\theta^3
\left[
\begin{array}{c}
1 \\
1
\end{array}
\right](z)
}. 
\end{equation*}
In the fundamental parallelogram, 
the pole of $\varphi_j(z) \,\,(j=1,2,3,4)$ is $z=0,$ 
which implies that 
$\mathrm{Res} (\varphi_j(z), 0)=0.$ 
Therefore, the proposition follows. 
\end{proof}

\begin{proposition}
\label{prop:E2}
{\it
For every $\tau\in \mathbb{H}^2,$ we have 
\begin{equation*}
\frac
{
\theta^{\prime\prime \prime}
\left[
\begin{array}{c}
1 \\
1
\end{array}
\right]
}
{
\theta^{\prime}
\left[
\begin{array}{c}
1 \\
1
\end{array}
\right]
}
=4 \pi i \frac{d}{d\tau} \log \theta^{\prime}
\left[
\begin{array}{c}
1 \\
1
\end{array}
\right]
=
-\pi^2E_2(q), \,\,q=\exp(2\pi i \tau). 
\end{equation*}
}
\end{proposition}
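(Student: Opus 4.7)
The plan is to prove the two equalities separately. The first equality follows immediately from the heat equation (\ref{eqn:heat}); the second from the product formula consequence of Jacobi's triple product identity stated just after (\ref{eqn:Jacobi-triple}).

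First, I would start from the heat equation with characteristic $\left[\begin{array}{c}1\\1\end{array}\right]$ and differentiate both sides once more with respect to $\zeta$. Since $\partial/\partial\tau$ and $\partial/\partial\zeta$ commute on the entire function $\theta\left[\begin{array}{c}1\\1\end{array}\right](\zeta,\tau)$, I obtain
\begin{equation*}
\frac{\partial^3}{\partial \zeta^3}\theta\left[\begin{array}{c}1\\1\end{array}\right](\zeta,\tau)=4\pi i\frac{\partial}{\partial\tau}\frac{\partial}{\partial\zeta}\theta\left[\begin{array}{c}1\\1\end{array}\right](\zeta,\tau).
\end{equation*}
Evaluating at $\zeta=0$ gives $\theta'''\left[\begin{array}{c}1\\1\end{array}\right]=4\pi i\,\frac{d}{d\tau}\theta'\left[\begin{array}{c}1\\1\end{array}\right]$, and dividing by $\theta'\left[\begin{array}{c}1\\1\end{array}\right]$ (which is nonzero on $\mathbb{H}^2$) yields the first equality.

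Next, for the second equality, I would invoke the product representation derived from (\ref{eqn:Jacobi-triple}),
\begin{equation*}
\theta^{\prime}\left[\begin{array}{c}1\\1\end{array}\right](0,\tau)=-2\pi\,q^{1/8}\prod_{n=1}^{\infty}(1-q^{n})^{3},\quad q=\exp(2\pi i\tau),
\end{equation*}
take logarithms, and differentiate in $\tau$. Using $\frac{d}{d\tau}\log q=2\pi i$ and $\frac{d}{d\tau}\log(1-q^{n})=-2\pi i n q^{n}/(1-q^{n})$, this gives
\begin{equation*}
\frac{d}{d\tau}\log\theta'\left[\begin{array}{c}1\\1\end{array}\right]=\frac{\pi i}{4}-6\pi i\sum_{n=1}^{\infty}\frac{n q^{n}}{1-q^{n}}.
\end{equation*}
Multiplying by $4\pi i$ produces $-\pi^{2}+24\pi^{2}\sum_{n\ge 1}n q^{n}/(1-q^{n})$, and the standard Lambert expansion $\sum_{n\ge 1}n q^{n}/(1-q^{n})=\sum_{n\ge 1}\sigma_{1}(n)q^{n}$ converts this precisely to $-\pi^{2}E_{2}(q)$.

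I do not expect any serious obstacle; the only care required is in verifying the commutativity step and correctly tracking the factors of $2\pi i$ coming from $q=\exp(2\pi i\tau)$ when logarithmically differentiating the product formula. Both steps are purely computational once the heat equation (\ref{eqn:heat}) and the product formula from (\ref{eqn:Jacobi-triple}) are in hand.
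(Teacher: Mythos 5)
Your proposal is correct and follows essentially the same route the paper intends: the paper's proof simply cites Jacobi's triple product identity (\ref{eqn:Jacobi-triple}), and your argument fills in the details by combining the heat equation (\ref{eqn:heat}) (differentiated once in $\zeta$ and evaluated at $\zeta=0$) for the first equality with logarithmic differentiation of the product expansion $\theta^{\prime}\left[\begin{smallmatrix}1\\1\end{smallmatrix}\right]=-2\pi q^{1/8}\prod_{n\ge 1}(1-q^{n})^{3}$ for the second. The constants check out ($4\pi i\cdot\tfrac{\pi i}{4}=-\pi^{2}$ and $4\pi i\cdot(-6\pi i)=24\pi^{2}$), so no gap remains.
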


\begin{proof}
The proposition follows from Jacobi's triple product identity (\ref{eqn:Jacobi-triple}). 
\end{proof}

\section{Proof of Theorem \ref{thm-level3-E_2(q)}}
\label{sec:proof:level3-E_2(q)}

\subsection{Notations}
For every $\tau\in\mathbb{H}^2,$ set $q=\exp(2\pi i \tau)$ and 
\begin{equation*}
X=
\frac
{
\theta^{\prime}
\left[
\begin{array}{c}
1 \\
\frac13
\end{array}
\right]
}
{
\theta
\left[
\begin{array}{c}
1 \\
\frac13
\end{array}
\right]
},  \,\,
Y=
\frac
{
\theta^{\prime  \prime  \prime}
\left[
\begin{array}{c}
1 \\
1
\end{array}
\right]
}
{
\theta^{\prime  }
\left[
\begin{array}{c}
1 \\
1
\end{array}
\right]
}, \,\,
Z=
\frac
{
\theta^{\prime  }
\left[
\begin{array}{c}
1 \\
1
\end{array}
\right]^3
}
{
\theta
\left[
\begin{array}{c}
1 \\
\frac13
\end{array}
\right]^3
}. 
\end{equation*}

\subsection{The ODE for $P(q)$}

\begin{proof}
From equation (\ref{eqn:relation-for-thm-1-1/3}), we have 
\begin{equation}
\frac{
\theta^{\prime \prime}
\left[
\begin{array}{c}
1 \\
\frac13
\end{array}
\right]  
}
{ 
\theta
\left[
\begin{array}{c}
1 \\
\frac13
\end{array}
\right]
}
=\frac13 Y-2X^2.
\end{equation}
By substituting $z=-1/3$ in equation (\ref{eqn:theta-derivative-3rd}), we obtain
\begin{equation}
\frac{
\theta^{\prime \prime \prime}
\left[
\begin{array}{c}
1 \\
\frac13
\end{array}
\right]  
}
{ 
\theta
\left[
\begin{array}{c}
1 \\
\frac13
\end{array}
\right]
}
=
-8 X^3+XY+Z. 
\end{equation}
\par
Since 
\begin{align*}
\frac{
\theta^{\prime \prime \prime}
\left[
\begin{array}{c}
1 \\
\frac13
\end{array}
\right]  
}
{ 
\theta
\left[
\begin{array}{c}
1 \\
\frac13
\end{array}
\right]
}
=&
4 \pi i
\frac{d}{d\tau}
\left\{
\frac{
\theta^{\prime }
\left[
\begin{array}{c}
1 \\
\frac13
\end{array}
\right]  
}
{ 
\theta
\left[
\begin{array}{c}
1 \\
\frac13
\end{array}
\right]
}
\right\}
+
\frac{
\theta^{\prime }
\left[
\begin{array}{c}
1 \\
\frac13
\end{array}
\right]  
}
{ 
\theta
\left[
\begin{array}{c}
1 \\
\frac13
\end{array}
\right]
}
\cdot
\frac{
\theta^{\prime \prime }
\left[
\begin{array}{c}
1 \\
\frac13
\end{array}
\right]  
}
{ 
\theta
\left[
\begin{array}{c}
1 \\
\frac13
\end{array}
\right]
},  \\
=&4\pi i X^{\prime}+\frac13 XY -2X^3, 
\end{align*}
it follows that 
\begin{equation}
4\pi i X^{\prime}=-6X^3+\frac23 XY+Z,  
\end{equation}
where ${ }^{\prime}=d/d\tau. $ 
Considering $d/d\tau=2 \pi i q d/dq,$ we obtain the ODE for $P(q).$ 
\end{proof}

\subsection{The ODE for $Q(q)$}

\begin{proof}
By substituting $z=-1/3$ in equation (\ref{eqn:theta-derivative-4th}), we have
\begin{equation}
\pm 6XZ=
\frac{
\theta^{(4)}
\left[
\begin{array}{c}
1 \\
\frac13
\end{array}
\right]  
}
{ 
\theta
\left[
\begin{array}{c}
1 \\
\frac13
\end{array}
\right]
}
-10X^4+4X^2Y-4XZ-\frac13Y^2. 
\end{equation}
Comparing the coefficients of the $q$-series, we obtain 
\begin{equation}
- 6XZ=
\frac{
\theta^{(4)}
\left[
\begin{array}{c}
1 \\
\frac13
\end{array}
\right]  
}
{ 
\theta
\left[
\begin{array}{c}
1 \\
\frac13
\end{array}
\right]
}
-10X^4+4X^2Y-4XZ-\frac13Y^2,
\end{equation}
which implies that 
\begin{equation}
\frac{
\theta^{(4)}
\left[
\begin{array}{c}
1 \\
\frac13
\end{array}
\right]  
}
{ 
\theta
\left[
\begin{array}{c}
1 \\
\frac13
\end{array}
\right]
}
=
10X^4-4X^2Y-2XZ+\frac13Y^2.
\end{equation}
\par
Since 
\begin{align*}
\frac{
\theta^{(4)}
\left[
\begin{array}{c}
1 \\
\frac13
\end{array}
\right]  
}
{ 
\theta
\left[
\begin{array}{c}
1 \\
\frac13
\end{array}
\right]
}
=&
4 \pi i
\frac{d}{d\tau}
\left\{
\frac{
\theta^{\prime \prime}
\left[
\begin{array}{c}
1 \\
\frac13
\end{array}
\right]  
}
{ 
\theta
\left[
\begin{array}{c}
1 \\
\frac13
\end{array}
\right]
}
\right\}
+
\left\{
\frac{
\theta^{\prime \prime }
\left[
\begin{array}{c}
1 \\
\frac13
\end{array}
\right]  
}
{ 
\theta
\left[
\begin{array}{c}
1 \\
\frac13
\end{array}
\right]
}
\right\}^2,   \\
=&
\frac43 \pi i Y^{\prime}+28X^4-4X^2Y-4XZ+\frac19Y^2, 
\end{align*}
it follows that 
\begin{equation}
4\pi i Y^{\prime}=-54X^4+6XZ+\frac23 Y^2,   
\end{equation}
where ${ }^{\prime}=d/d\tau. $ 
Considering $d/d\tau=2 \pi i q d/dq,$ we obtain the ODE for $Q(q).$ 
\end{proof}

\subsection{The ODE for $R(q)$}

\begin{proof}
By substituting $z=-1/3$ in equation (\ref{eqn:theta-derivative-5th}), we have
\begin{equation}
\frac{
\theta^{(5)}
\left[
\begin{array}{c}
1 \\
\frac13
\end{array}
\right]  
}
{ 
\theta
\left[
\begin{array}{c}
1 \\
\frac13
\end{array}
\right]
}
=
106X^5-\frac{80}{3}X^3 Y-14X^2 Z+\frac53XY^2+\frac{10}{3}YZ,
\end{equation}
and
\begin{equation}
\label{eqn:a^2(q)b^3(q)}
a^2(q)b^3(q)=
a^2(q) \frac{(q;q)_{\infty}^9}{(q^3;q^3)_{\infty}^3}
=
1+3\sum_{n=1}^{\infty} q^n
\left(
\sum_{d|n} d^4
\left(
\frac{d}{3}
\right)
\right). 
\end{equation}
\par
Since 
\begin{align*}
\frac{
\theta^{(5)}
\left[
\begin{array}{c}
1 \\
\frac13
\end{array}
\right]  
}
{ 
\theta
\left[
\begin{array}{c}
1 \\
\frac13
\end{array}
\right]
}
=&
4 \pi i
\frac{d}{d\tau}
\left\{
\frac{
\theta^{(3) }
\left[
\begin{array}{c}
1 \\
\frac13
\end{array}
\right]  
}
{ 
\theta
\left[
\begin{array}{c}
1 \\
\frac13
\end{array}
\right]
}
\right\}
+
\frac{
\theta^{(3) }
\left[
\begin{array}{c}
1 \\
\frac13
\end{array}
\right]  
}
{ 
\theta
\left[
\begin{array}{c}
1 \\
\frac13
\end{array}
\right]
}
\cdot
\frac{
\theta^{\prime \prime }
\left[
\begin{array}{c}
1 \\
\frac13
\end{array}
\right]  
}
{ 
\theta
\left[
\begin{array}{c}
1 \\
\frac13
\end{array}
\right]
},  \\
=&
4\pi i Z^{\prime}+106X^5-\frac{80}{3}X^3Y-20X^2Z+\frac53XY^2+\frac43YZ, 
\end{align*}
it follows that 
\begin{equation}
4\pi i Z^{\prime}=6X^2Z+2YZ, 
\end{equation}
where ${ }^{\prime}=d/d\tau. $ 
Considering $d/d\tau=2 \pi i q d/dq,$ we obtain the ODE for $R(q).$ 
\end{proof}

\subsection{Note on $E_4$ and $E_6$}

\begin{theorem}
\label{thm-E4-E6-level3-q}
{\it
For $q\in\mathbb{C}$ with $|q|<1,$ we have 
\begin{equation}
\label{eqn-E4-E6-level3}
E_4(q)=9a^4(q)-8a(q)b^3(q), \,\,
E_6(q)=-27a^6(q)+36a^3(q)b^3(q)-8b^6(q). 
\end{equation}
}
\end{theorem}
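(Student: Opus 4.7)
The plan is to derive both identities by pure algebra from the ODEs of Theorem \ref{thm-level3-E_2(q)} together with Ramanujan's system (\ref{eqn:Ramanujan-ODE}); no further theta-function input is required. The key point is that in Theorem \ref{thm-level3-E_2(q)} the middle ODE for $Q=E_2$ is essentially Ramanujan's relation $qdE_2/dq=(E_2^2-E_4)/12$, so solving one against the other yields $E_4$ as a polynomial in $P=a$ and $R=b^3$, and then differentiating once more yields $E_6$.

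First I would express $E_4$ by rearranging Ramanujan's first ODE into the form $E_4 = E_2^2 - 12\,q(dE_2/dq)$, identify $Q=E_2$, and substitute the middle equation of (\ref{eqn-level3-E_2(q)}), which gives $12\,q(dQ/dq) = -9P^4+8PR+Q^2$. The $Q^2$ terms cancel and what remains is $E_4 = 9P^4-8PR = 9a^4(q)-8a(q)b^3(q)$. This already proves the first identity, and incidentally shows that the $Q$-dependence has disappeared, as it must since $E_4$ is a modular form on the full modular group.

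For $E_6$ I would use Ramanujan's second ODE in the form $E_6 = E_2 E_4 - 3\,q(dE_4/dq)$, differentiate $E_4=9P^4-8PR$ using all three ODEs of (\ref{eqn-level3-E_2(q)}), and simplify. Writing $q(dE_4/dq) = (36P^3-8R)\,q(dP/dq) - 8P\,q(dR/dq)$ and substituting $q(dP/dq)=(3P^3+PQ-4R)/12$ and $q(dR/dq)=R(Q-P^2)/4$ produces, after collecting like terms, $3\,q(dE_4/dq) = 27P^6+9P^4Q-36P^3R-8PQR+8R^2$. Combining with $E_2E_4 = 9P^4Q-8PQR$, the $Q$-dependent contributions cancel, leaving $E_6 = -27P^6+36P^3R-8R^2$, which is the second claimed identity.

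There is no conceptual obstacle; the only real risk is a coefficient or sign slip in the expansion of $q(dE_4/dq)$, since four monomials enter and several of them partially cancel. I would guard against this by a consistency check: substitute the derived formulas for $E_4$ and $E_6$ into Ramanujan's third ODE $q(dE_6/dq) = (E_2E_6-E_4^2)/2$ and verify compatibility using Theorem \ref{thm-level3-E_2(q)}; equivalently, one can match the first few $q$-series coefficients of both sides of each identity using equations (\ref{eqn:main:pro-series(1)}) and the series definition of $a(q)$.
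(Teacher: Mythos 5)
Your proposal is correct --- I verified the algebra: $E_4=Q^2-12\,q\,dQ/dq=9P^4-8PR$, and $3\,q\,dE_4/dq=27P^6+9P^4Q-36P^3R-8PQR+8R^2$, so $E_6=E_2E_4-3\,q\,dE_4/dq=-27P^6+36P^3R-8R^2$ --- but it takes a genuinely different route from the paper. The paper does not invoke Ramanujan's system here at all; it evaluates the Weierstrass relations $\wp''=6\wp^2-g_2$ and $(\wp')^2=4\wp^3-g_2\wp-g_3$ at $z=-1/3$, using $g_2(1,\tau)=\tfrac{4\pi^4}{3}E_4(q)$ and $g_3(1,\tau)=\tfrac{8\pi^6}{27}E_6(q)$ together with the theta-quotient notation $X,Y,Z$ of Section \ref{sec:proof:level3-E_2(q)}, obtaining $g_2=108X^4-12XZ$ and $g_3=-216X^6+36X^3Z-Z^2$ directly. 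Your route is more elementary once Theorem \ref{thm-level3-E_2(q)} is in hand (pure differentiation and cancellation, with the pleasant feature that the $Q$-dependence visibly drops out, as it must), but it imports Ramanujan's ODEs (\ref{eqn:Ramanujan-ODE}) as an external input. That is logically legitimate, since these are classical and the paper cites Berndt for their proof; however, be aware that the paper itself later \emph{re-derives} Ramanujan's ODEs from Theorems \ref{thm-level3-E_2(q)} and \ref{thm-E4-E6-level3-q} in Section \ref{sec:proof:level3-E_2(q)}, and under your proof that derivation would become circular --- the $\wp$-function argument is precisely what keeps the paper's overall logical structure self-contained. Your proposed consistency check against the third Ramanujan ODE is, in fact, exactly the computation the paper carries out there.
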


\begin{proof}
For the proof, we use 
\begin{equation*}
\wp=\wp(z;1,\tau)
=
\frac13
\frac
{
\theta^{\prime  \prime  \prime}
\left[
\begin{array}{c}
1 \\
1
\end{array}
\right]
}
{
\theta^{\prime  }
\left[
\begin{array}{c}
1 \\
1
\end{array}
\right]
}
-
\frac{d^2}{dz^2}
\log
\theta
\left[
\begin{array}{c}
1 \\
1
\end{array}
\right](z). 
\end{equation*} 
\par
We first note that 
\begin{equation*}
\wp^{\prime \prime}=6\wp^2-g_2, \,\,
g_2(1,\tau)=\frac{4\pi^4}{3}E_4(q).
\end{equation*}
Substituting $z=-1/3,$ we have 
\begin{equation}
\label{eqn:g2-X-Z(1)}
g_2=108X^4-12XZ,
\end{equation}
which implies 
\begin{equation}
E_4(q)=9P^4-8PR. 
\end{equation}
\par
We next note that 
\begin{equation*}
(\wp^{\prime})^2=4\wp^3-g_2\wp-g_3, \,\,
g_3(1,\tau)=\frac{8\pi^6}{27}E_6(q).
\end{equation*}
Substituting $z=-1/3,$ we obtain 
\begin{equation}
\label{eqn:g3-X-Z(1)}
g_3=-216X^6+36X^3Z-Z^2,
\end{equation}
which implies 
\begin{equation*}
E_6(q)=-27P^6+36P^3R-8R^2. 
\end{equation*}
\end{proof}

\begin{theorem}
\label{thm:J-a-b^3}
{\it
For every $\tau\in\mathbb{H}^2,$ we have 
\begin{align*}
J(\tau)=&J(q)
=\frac{12^3 g_2^3}{g_2^3-27 g_3^2}  \\
=&\frac{27 a^3(q)\left(9 a^3(q)  -8  b^3(q)\right)^3}{b^9(q) \left(a^3(q)  -b^3(q)\right)}
=\frac{27a^3(q)(a^3(q)+8c^3(q))^3  }{b^9(q)c^3(q)  }, \,\,q=\exp(2\pi i \tau).
\end{align*}
}
\end{theorem}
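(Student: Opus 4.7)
The plan is to reduce the identity to a direct algebraic verification using the expressions for $E_4(q)$ and $E_6(q)$ already established in Theorem \ref{thm-E4-E6-level3-q}. Recall the standard relations $g_2(1,\tau) = \frac{4\pi^4}{3}E_4(q)$ and $g_3(1,\tau) = \frac{8\pi^6}{27}E_6(q)$, which give
\[
\frac{12^3\,g_2^3}{g_2^3-27g_3^2} \;=\; \frac{1728\,E_4^3(q)}{E_4^3(q)-E_6^2(q)}.
\]
So the task is to show that this last quantity equals the claimed expression in $a(q)$ and $b^3(q)$.

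Next, I would substitute $E_4 = a(9a^3 - 8b^3)$ and $E_6 = -27a^6 + 36a^3b^3 - 8b^6$ from Theorem \ref{thm-E4-E6-level3-q}. Writing $u = a^3$ and $v = b^3$ to lighten notation, the numerator becomes $E_4^3 = u(9u-8v)^3$, and the key step is the cancellation in the denominator: an expansion gives
\[
E_4^3 - E_6^2 \;=\; u(9u-8v)^3 \;-\; (-27u^2 + 36uv - 8v^2)^2 \;=\; 64\,v^3(u-v),
\]
so that $E_4^3 - E_6^2 = 64\,b^9(a^3 - b^3)$. Dividing then yields
\[
\frac{1728\,E_4^3}{E_4^3 - E_6^2} \;=\; \frac{27\,a^3(9a^3-8b^3)^3}{b^9(a^3-b^3)},
\]
which is the first claimed form.

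For the second form, I would invoke the Borwein cubic theta identity $a^3(q) = b^3(q) + c^3(q)$ recalled in Remark~1 (equation (\ref{eqn:Ramanujan-cubic-intro})). This gives both $a^3 - b^3 = c^3$ and $9a^3 - 8b^3 = a^3 + 8(a^3 - b^3) = a^3 + 8c^3$. Substituting these two relations into the first expression immediately produces
\[
\frac{27\,a^3(a^3+8c^3)^3}{b^9 c^3},
\]
completing the proof.

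The main obstacle is really just the bookkeeping in the polynomial identity $u(9u-8v)^3 - (-27u^2+36uv-8v^2)^2 = 64v^3(u-v)$; there is no conceptual difficulty, but one needs to track the coefficients carefully since the leading terms in $u^4$, $u^3v$, and $u^2v^2$ all cancel between $E_4^3$ and $E_6^2$, and the identity only survives through the lower-order terms. Everything else is immediate from Theorem \ref{thm-E4-E6-level3-q} and the Borwein identity.
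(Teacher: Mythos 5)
Your proposal is correct and follows essentially the same route as the paper: the paper substitutes its level-three expressions for $g_2$ and $g_3$ (equations (\ref{eqn:g2-X-Z(1)}) and (\ref{eqn:g3-X-Z(1)}), which are the $X,Z$-forms of Theorem \ref{thm-E4-E6-level3-q}) into $J=12^3g_2^3/(g_2^3-27g_3^2)$ and simplifies, exactly the polynomial cancellation you carry out in the normalized variables $E_4,E_6$, and it likewise obtains the second form from the cubic identity $a^3=b^3+c^3$. Your computation $E_4^3-E_6^2=64\,b^9(a^3-b^3)$ checks out and merely makes explicit what the paper leaves as a one-line assertion.
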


\begin{proof}
By equations (\ref{eqn:g2-X-Z(1)}) and (\ref{eqn:g3-X-Z(1)}), we have 
\begin{equation*}
J(\tau)=
\frac{110592 \left(9 X^4-X Z\right)^3}{Z^3 \left(8 X^3-Z\right)},
\end{equation*}
which proves the theorem. 
The second formula follows from Ramanujan's cubic identity (\ref{eqn:Ramanujan-cubic-intro}). 
\end{proof}

\subsection{Proof of Ramanujan's system of ODEs (\ref{eqn:Ramanujan-ODE})}

\begin{theorem}
{\it
For $q\in\mathbb{C}$ with $|q|<1,$ 
we have 
\begin{equation*}
q\frac{dE_2}{dq}=\frac{(E_2)^2-E_4  }{12  }, \,\,
q\frac{dE_4}{dq}=\frac{E_2 E_4-E_6  }{3  }, \,\,
q\frac{dE_6}{dq}=\frac{E_2 E_6-(E_4)^2  }{2  }. 
\end{equation*}
}
\end{theorem}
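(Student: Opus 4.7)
The plan is to deduce each of the three Ramanujan ODEs as an algebraic consequence of Theorem \ref{thm-level3-E_2(q)} and Theorem \ref{thm-E4-E6-level3-q} alone. Write $\theta:=q\,d/dq$ for brevity, and set $P:=a(q)$, $Q:=E_2(q)$, $R:=b^3(q)$ as in Theorem \ref{thm-level3-E_2(q)}. The first theorem expresses $\theta P,\theta Q,\theta R$ as explicit polynomials in $P,Q,R$, while the second expresses $E_4,E_6$ as polynomials in $P,R$; combining these, each of the three Ramanujan identities becomes a polynomial identity in $P,Q,R$ that can be verified by direct expansion.

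For $E_2$, the identity is essentially free. Since $E_2=Q$, Theorem \ref{thm-level3-E_2(q)} gives $12\theta Q = Q^2-9P^4+8PR$, and Theorem \ref{thm-E4-E6-level3-q} gives $E_4=9P^4-8PR$, so the two combine to yield $\theta E_2=(E_2^2-E_4)/12$ on the nose.

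For $E_4$, I will differentiate the relation $E_4=9P^4-8PR$ by the Leibniz rule to obtain $\theta E_4 = 36P^3\,\theta P - 8R\,\theta P - 8P\,\theta R$, then substitute $12\theta P = 3P^3+PQ-4R$ and $4\theta R = R(Q-P^2)$ from Theorem \ref{thm-level3-E_2(q)}, and collect terms. The right-hand side of the target identity, $(E_2 E_4 - E_6)/3 = \bigl[Q(9P^4-8PR)+27P^6-36P^3R+8R^2\bigr]/3$, is likewise expanded via Theorem \ref{thm-E4-E6-level3-q}, and the two polynomials in $P,Q,R$ should match term by term.

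The $E_6$ identity is handled identically: differentiate $E_6 = -27P^6+36P^3R-8R^2$, substitute the formulas for $\theta P$ and $\theta R$, and match the result against $(E_2 E_6 - E_4^2)/2$, where $E_4^2=(9P^4-8PR)^2$ is expanded out. The main obstacle here is purely algebraic bookkeeping, since the resulting identity involves polynomials of total degree up to eight in $P,Q,R$; but because no analytic input is needed beyond the two theorems cited, it reduces to a mechanical symbolic check.
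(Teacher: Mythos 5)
Your proposal is correct and follows essentially the same route as the paper: the paper likewise obtains the $E_2$ equation immediately from the $Q$-equation of Theorem \ref{thm-level3-E_2(q)} together with $E_4=9P^4-8PR$, and proves the $E_4$ and $E_6$ equations by differentiating the polynomial expressions of Theorem \ref{thm-E4-E6-level3-q}, substituting the ODEs for $\theta P$ and $\theta R$, and matching the resulting polynomials in $P,Q,R$. Your intermediate expansion $\theta E_4 = 9P^6+3P^4Q-12P^3R-\tfrac{8}{3}PQR+\tfrac{8}{3}R^2$ agrees with the paper's, so the mechanical check goes through exactly as you describe.
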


\begin{proof}
By Theorems \ref{thm-level3-E_2(q)} and \ref{thm-E4-E6-level3-q}, 
we first note that 
\begin{equation*}
q\frac{d}{dq}E_2=q\frac{d}{dq}Q=\frac{Q^2-(9P^2-8PR)}{12}
=\frac{E_2^2-E_4}{12}. 
\end{equation*}
We next see that 
\begin{align*}
q\frac{d}{dq}E_4=&
q\frac{d}{dq}(9P^4-8PR)
=9P^6+3P^4Q-12P^3R+\frac83R^2-\frac83PQR  \\
=&\frac{E_2E_4-E_6}{3}. 
\end{align*}
Finally, we obtain 
\begin{align*}
q\frac{d}{dq}E_6=&
q\frac{d}{dq}(-27P^6+36P^3R-8R^2)\\  
=&
-\frac{81}{2}P^8-\frac{27}{2}P^6Q+72P^5R+18P^3QR-32P^2R^2-4QR^2 \\
=&\frac{E_2E_6-E_4^2}{2}. 
\end{align*}
\end{proof}

\section{Proof of Theorem \ref{thm-level3-E_2(q^3)}}
\label{sec:proof:level3-E_2(q^3)}

\subsection{Notations}
For every $\tau\in\mathbb{H}^2,$ set $y=\exp(2\pi i \tau/3), \,q=y^3$ and 
\begin{equation*}
X=
\frac
{
\theta^{\prime}
\left[
\begin{array}{c}
\frac13 \\
1
\end{array}
\right]
}
{
\theta
\left[
\begin{array}{c}
\frac13 \\
1
\end{array}
\right]
},  \,\,
Y=
\frac
{
\theta^{\prime  \prime  \prime}
\left[
\begin{array}{c}
1 \\
1
\end{array}
\right]
}
{
\theta^{\prime  }
\left[
\begin{array}{c}
1 \\
1
\end{array}
\right]
}, \,\,
Z=
-
\frac
{
\theta^{\prime  }
\left[
\begin{array}{c}
1 \\
1
\end{array}
\right]^3
}
{
\theta
\left[
\begin{array}{c}
\frac13 \\
1
\end{array}
\right]^3
}. 
\end{equation*}

\subsection{The ODE for $P(q)$}

\begin{proof}
From equation (\ref{eqn:relation-for-thm-1/3-1}), we have 
\begin{equation}
\frac{
\theta^{\prime \prime}
\left[
\begin{array}{c}
\frac13 \\
1
\end{array}
\right]  
}
{ 
\theta
\left[
\begin{array}{c}
\frac13 \\
1
\end{array}
\right]
}
=\frac13 Y-2X^2.
\end{equation}
By substituting $z=-\tau/3$ in equation (\ref{eqn:theta-derivative-3rd}), we obtain
\begin{equation}
\frac{
\theta^{\prime \prime \prime}
\left[
\begin{array}{c}
\frac13 \\
1
\end{array}
\right]  
}
{ 
\theta
\left[
\begin{array}{c}
\frac13 \\
1
\end{array}
\right]
}
=
-8 X^3+XY+Z. 
\end{equation}
\par
Since 
\begin{align*}
\frac{
\theta^{\prime \prime \prime}
\left[
\begin{array}{c}
\frac13 \\
1
\end{array}
\right]  
}
{ 
\theta
\left[
\begin{array}{c}
\frac13 \\
1
\end{array}
\right]
}
=&
4 \pi i
\frac{d}{d\tau}
\left\{
\frac{
\theta^{\prime }
\left[
\begin{array}{c}
\frac13 \\
1
\end{array}
\right]  
}
{ 
\theta
\left[
\begin{array}{c}
\frac13 \\
1
\end{array}
\right]
}
\right\}
+
\frac{
\theta^{\prime }
\left[
\begin{array}{c}
\frac13 \\
1
\end{array}
\right]  
}
{ 
\theta
\left[
\begin{array}{c}
\frac13 \\
1
\end{array}
\right]
}
\cdot
\frac{
\theta^{\prime \prime }
\left[
\begin{array}{c}
\frac13 \\
1
\end{array}
\right]  
}
{ 
\theta
\left[
\begin{array}{c}
\frac13 \\
1
\end{array}
\right]
},  \\
=&4\pi i X^{\prime}+\frac13 XY -2X^3, 
\end{align*}
it follows that 
\begin{equation}
4\pi i X^{\prime}=-6X^3+\frac23 XY+Z,  
\end{equation}
where ${ }^{\prime}=d/d\tau. $ 
Changing $\tau\rightarrow 3\tau,$ we obtain the ODE for $P(q).$ 
\end{proof}

\subsection{The ODE for $Q(q)$}

\begin{proof}
By substituting $z=-\tau/3$ in equation (\ref{eqn:theta-derivative-4th}), we have
\begin{equation}
\pm 6XZ=
\frac{
\theta^{(4)}
\left[
\begin{array}{c}
\frac13 \\
1
\end{array}
\right]  
}
{ 
\theta
\left[
\begin{array}{c}
\frac13 \\
1
\end{array}
\right]
}
-10X^4+4X^2Y-4XZ-\frac13Y^2. 
\end{equation}
Comparing the coefficients of the $y$-series, we obtain 
\begin{equation}
- 6XZ=
\frac{
\theta^{(4)}
\left[
\begin{array}{c}
\frac13 \\
1
\end{array}
\right]  
}
{ 
\theta
\left[
\begin{array}{c}
\frac13 \\
1
\end{array}
\right]
}
-10X^4+4X^2Y-4XZ-\frac13Y^2,
\end{equation}
which implies that 
\begin{equation}
\frac{
\theta^{(4)}
\left[
\begin{array}{c}
\frac13 \\
1
\end{array}
\right]  
}
{ 
\theta
\left[
\begin{array}{c}
\frac13 \\
1
\end{array}
\right]
}
=
10X^4-4X^2Y-2XZ+\frac13Y^2.
\end{equation}
\par
Since 
\begin{align*}
\frac{
\theta^{(4)}
\left[
\begin{array}{c}
\frac13 \\
1
\end{array}
\right]  
}
{ 
\theta
\left[
\begin{array}{c}
\frac13 \\
1
\end{array}
\right]
}
=&
4 \pi i
\frac{d}{d\tau}
\left\{
\frac{
\theta^{\prime \prime}
\left[
\begin{array}{c}
\frac13 \\
1
\end{array}
\right]  
}
{ 
\theta
\left[
\begin{array}{c}
\frac13 \\
1
\end{array}
\right]
}
\right\}
+
\left\{
\frac{
\theta^{\prime \prime }
\left[
\begin{array}{c}
\frac13 \\
1
\end{array}
\right]  
}
{ 
\theta
\left[
\begin{array}{c}
\frac13 \\
1
\end{array}
\right]
}
\right\}^2,   \\
=&
\frac43 \pi i Y^{\prime}+28X^4-4X^2Y-4XZ+\frac19Y^2, 
\end{align*}
it follows that 
\begin{equation}
4\pi i Y^{\prime}=-54X^4+6XZ+\frac23 Y^2,   
\end{equation}
where ${ }^{\prime}=d/d\tau. $ 
Changing $\tau\rightarrow 3\tau,$ we obtain the ODE for $Q(q).$ 
\end{proof}

\subsection{The ODE for $R(q)$}

\begin{proof}
By substituting $z=-\tau/3$ in equation (\ref{eqn:theta-derivative-5th}), we have
\begin{equation}
\frac{
\theta^{(5)}
\left[
\begin{array}{c}
\frac13 \\
1
\end{array}
\right]  
}
{ 
\theta
\left[
\begin{array}{c}
\frac13 \\
1
\end{array}
\right]
}
=
106X^5-\frac{80}{3}X^3 Y-14X^2 Z+\frac53XY^2+\frac{10}{3}YZ,
\end{equation}
and
\begin{equation}
\label{eqn:a^2(q)c^3(q)}
a^2(q) c^3(q)=
a^2(q) \cdot
27
q
\frac{(q^3;q^3)_{\infty}^9  }{(q;q)_{\infty}^3  }
=
27
\sum_{n=1}^{\infty}
q^n 
\left(
\sum_{d|n}
d^4
\left(
\frac{n/d}{3}
\right)
\right). 
\end{equation}
\par
Since 
\begin{align*}
\frac{
\theta^{(5)}
\left[
\begin{array}{c}
\frac13 \\
1
\end{array}
\right]  
}
{ 
\theta
\left[
\begin{array}{c}
\frac13 \\
1
\end{array}
\right]
}
=&
4 \pi i
\frac{d}{d\tau}
\left\{
\frac{
\theta^{(3) }
\left[
\begin{array}{c}
\frac13 \\
1
\end{array}
\right]  
}
{ 
\theta
\left[
\begin{array}{c}
\frac13 \\
1
\end{array}
\right]
}
\right\}
+
\frac{
\theta^{(3) }
\left[
\begin{array}{c}
\frac13 \\
1
\end{array}
\right]  
}
{ 
\theta
\left[
\begin{array}{c}
\frac13 \\
1
\end{array}
\right]
}
\cdot
\frac{
\theta^{\prime \prime }
\left[
\begin{array}{c}
\frac13 \\
1
\end{array}
\right]  
}
{ 
\theta
\left[
\begin{array}{c}
\frac13 \\
1
\end{array}
\right]
},  \\
=&
4\pi i Z^{\prime}+106X^5-\frac{80}{3}X^3Y-20X^2Z+\frac53XY^2+\frac43YZ, 
\end{align*}
it follows that 
\begin{equation}
4\pi i Z^{\prime}=6X^2Z+2YZ, 
\end{equation}
where ${ }^{\prime}=d/d\tau. $ 
Changing $\tau\rightarrow 3\tau,$ we obtain the ODE for $R(q).$ 
\end{proof}

\subsection{Note on $E_4$ and $E_6$}

\begin{theorem}
\label{thm-E4-E6-level3-q^3}
{\it
For $q\in\mathbb{C}$ with $|q|<1,$ 
we have 
\begin{equation}
\label{eqn-E4-E6-level3-q^3}
E_4(q^3)=a^4(q)-\frac89a(q)c^3(q), \,\,
E_6(q^3)=a^6(q)-\frac43a^3(q)c^3(q)+\frac{8}{27}c^6(q). 
\end{equation}
}
\end{theorem}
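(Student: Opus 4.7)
The plan is to parallel the proof of Theorem \ref{thm-E4-E6-level3-q}, now substituting $z=-\tau/3$ (in place of $z=-1/3$) into the Weierstrass identities $\wp''(z) = 6\wp^2(z) - \frac{1}{2}g_2$ and $\wp'(z)^2 = 4\wp^3(z) - g_2\wp(z) - g_3$, where $\wp(z)=\wp(z;1,\tau)$. By equation \eqref{eqn:real-char}, shifting $z$ by $-\tau/3$ in $\log\theta\left[\begin{smallmatrix}1\\1\end{smallmatrix}\right](z,\tau)$ contributes only a linear-in-$z$ term (plus a constant), so for every $k\geq 2$ one has
\begin{equation*}
\frac{d^k}{dz^k}\log\theta\left[\begin{smallmatrix}1\\1\end{smallmatrix}\right](z,\tau)\bigg|_{z=-\tau/3}
= \frac{d^k}{dz^k}\log\theta\left[\begin{smallmatrix}1/3\\1\end{smallmatrix}\right](z,\tau)\bigg|_{z=0}.
\end{equation*}
Substituting the expressions for $\theta^{(k)}\left[\begin{smallmatrix}1/3\\1\end{smallmatrix}\right]/\theta\left[\begin{smallmatrix}1/3\\1\end{smallmatrix}\right]$ ($k=2,3,4$) already derived in Section \ref{sec:proof:level3-E_2(q^3)}, the same polynomial formulas appear as in the proof of Theorem \ref{thm-E4-E6-level3-q}, namely
\begin{equation*}
g_2 = 108X^4 - 12XZ, \qquad g_3 = -216X^6 + 36X^3 Z - Z^2.
\end{equation*}

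Next, I use the standard identifications $g_2(1,\tau)=\frac{4\pi^4}{3}E_4(q)$ and $g_3(1,\tau)=\frac{8\pi^6}{27}E_6(q)$, with $q=e^{2\pi i\tau}$. By Proposition \ref{prop:1st-derivative}, $X=\frac{\pi i}{3}a(\tau/3)$; and from the Jacobi triple product \eqref{eqn:Jacobi-triple} one finds $\theta\left[\begin{smallmatrix}1/3\\1\end{smallmatrix}\right]=e^{\pi i/6}\eta(\tau/3)$, hence $Z = -8\pi^3 i\,\eta^9(\tau)/\eta^3(\tau/3)$. Finally I perform the change of variable $\tau\mapsto 3\tau$, as in Section \ref{sec:proof:level3-E_2(q^3)}: the Eisenstein series become $E_4(q^3)$ and $E_6(q^3)$, while $X\mapsto\frac{\pi i}{3}P(q)$ and (using $\eta^9(3\tau)/\eta^3(\tau)=c^3(q)/27$, cf.\ Remark 1) $Z\mapsto -\frac{8\pi^3 i}{27}R(q)$. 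Substituting into the two polynomial identities for $g_2$ and $g_3$ and simplifying the powers of $\pi$ and $i$ produces the asserted formulas.

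The main obstacle is purely arithmetic bookkeeping: the factors of $i$ must be tracked carefully, since $i^4=1$ but $i^6=-1$. In particular, the all-positive sign pattern of $E_6(q^3)=a^6-\frac{4}{3}a^3 c^3+\frac{8}{27}c^6$ (after the leading term) arises precisely because $X^3Z$ and $Z^2$ each carry a factor of $i^2=-1$, reversing the alternating sign pattern visible in $E_6(q)=-27a^6+36a^3 b^3-8b^6$ from Theorem \ref{thm-E4-E6-level3-q}.
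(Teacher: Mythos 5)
Your argument is correct and takes essentially the same route as the paper's own proof: substitute $z=-\tau/3$ into the Weierstrass relations to obtain $g_2=108X^4-12XZ$ and $g_3=-216X^6+36X^3Z-Z^2$, identify $X$ and $Z$ via Proposition \ref{prop:1st-derivative} and the triple product, and then rescale $\tau\mapsto 3\tau$ (your identifications $\theta\bigl[\begin{smallmatrix}1/3\\1\end{smallmatrix}\bigr]=e^{\pi i/6}\eta(\tau/3)$ and $Z\mapsto -\tfrac{8\pi^3 i}{27}c^3(q)$ check out and correctly reproduce the stated coefficients). Only your closing side remark is off: the sign pattern of $E_6(q^3)=a^6-\tfrac43 a^3c^3+\tfrac{8}{27}c^6$ is not ``all positive'' but still alternating, merely with the overall sign reversed relative to $E_6(q)$ because of the odd power of $i^2$ entering through $X^6$; this does not affect the validity of the proof.
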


\begin{proof}
For the proof, we use 
\begin{equation*}
\wp=\wp(z;1,\tau)
=
\frac13
\frac
{
\theta^{\prime  \prime  \prime}
\left[
\begin{array}{c}
1 \\
1
\end{array}
\right]
}
{
\theta^{\prime  }
\left[
\begin{array}{c}
1 \\
1
\end{array}
\right]
}
-
\frac{d^2}{dz^2}
\log
\theta
\left[
\begin{array}{c}
1 \\
1
\end{array}
\right](z). 
\end{equation*} 
\par
We first note that 
\begin{equation*}
\wp^{\prime \prime}=6\wp^2-g_2, \,\,
g_2(1,\tau)=\frac{4\pi^4}{3}E_4(q). 
\end{equation*}
Substituting $z=-\tau/3,$ we have 
\begin{equation}
\label{eqn:g2-X-Z(2)}
g_2=108X^4-12XZ,
\end{equation}
which implies 
\begin{equation}
E_4(q)=9P(y)^4-8P(y)R(y). 
\end{equation}
\par
We next note that 
\begin{equation*}
(\wp^{\prime})^2=4\wp^3-g_2\wp-g_3, \,\,
g_3(1,\tau)=\frac{8\pi^6}{27}E_6(q). 
\end{equation*}
Substituting $z=-\tau/3,$ we obtain 
\begin{equation}
\label{eqn:g3-X-Z(2)}
g_3=-216X^6+36X^3Z-Z^2,
\end{equation}
which implies 
\begin{equation*}
E_6(q)=P(y)^6-36P(y)^3R(y)+216R(y)^2. 
\end{equation*}
Changing $\tau\rightarrow 3\tau,$ we obtain the theorem. 
\end{proof}

\begin{theorem}
\label{thm:J-a-c^3}
{\it
For every $\tau\in\mathbb{H}^2,$ we have 
\begin{equation*}
J(3\tau)=J(q^3)
=\frac{27 a^3(q) \left(9 a^3(q)  -8  c^3(q)\right)^3}{c^9(q) \left(a^3(q)  -c^3(q)\right)}
=
\frac
{
27 a^3(q)( a^3(q)+ 8 b^3(q))^3
}
{
b^3(q) c^9(q)
}, \,\,q=\exp(2\pi i \tau). 
\end{equation*}
}
\end{theorem}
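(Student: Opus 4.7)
The proof will follow the template of Theorem \ref{thm:J-a-b^3}, replacing $b^3(q)$ by $c^3(q)$ throughout. First I would rewrite the defining identity $J=1728\,g_2^3/(g_2^3-27g_3^2)$ in Eisenstein form as $J(\tau)=1728\,E_4(\tau)^3/(E_4(\tau)^3-E_6(\tau)^2)$ and evaluate at $3\tau$ to obtain
$$J(3\tau)=1728\,\frac{E_4(q^3)^3}{E_4(q^3)^3-E_6(q^3)^2}.$$
Next I would substitute the expressions from Theorem \ref{thm-E4-E6-level3-q^3}, writing $E_4(q^3)=(a/9)(9a^3-8c^3)$ with $a=a(q)$ and $c^3=c^3(q)$, so that the cube of the numerator takes the factored form $E_4(q^3)^3=(a^3/729)(9a^3-8c^3)^3$.

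The crucial step is the discriminant factorization
$$E_4(q^3)^3-E_6(q^3)^2=\frac{64}{729}\,c^9(q)\,\bigl(a^3(q)-c^3(q)\bigr),$$
which I would verify by direct polynomial expansion in the two quantities $a^3$ and $c^3$, exactly the bookkeeping implicit in Theorem \ref{thm:J-a-b^3}. A cleaner modular proof is also available: since $E_4^3-E_6^2=1728\,\eta^{24}$, the identity reduces to $\eta(3\tau)^{24}=b^3(q)c^9(q)/19683$, which follows at once from the eta-product formulas $b^3(q)=\eta(\tau)^9/\eta(3\tau)^3$ and $c^3(q)=27\,\eta(3\tau)^9/\eta(\tau)^3$ (see Remark~1 together with Corollary \ref{coro:pro-series-3order}), combined with $b^3=a^3-c^3$ from Ramanujan's cubic identity (\ref{eqn:Ramanujan-cubic-intro}).

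Putting the two ingredients together yields
$$J(3\tau)=1728\cdot\frac{(a^3/729)(9a^3-8c^3)^3}{(64/729)\,c^9(a^3-c^3)}=\frac{27\,a^3(9a^3-8c^3)^3}{c^9(a^3-c^3)},$$
which is the first asserted formula. For the second formula, I would invoke Ramanujan's cubic identity one more time: $a^3-c^3=b^3$ rewrites the factor $(a^3-c^3)$ in the denominator as $b^3$, while substituting $c^3=a^3-b^3$ in the numerator gives $9a^3-8c^3=9a^3-8(a^3-b^3)=a^3+8b^3$. The main obstacle is entirely the verification of the discriminant factorization; by hand the polynomial expansion is tedious, so I expect the modular route via $\Delta=\eta^{24}$ to be preferable. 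Everything after that step is a purely mechanical rearrangement.
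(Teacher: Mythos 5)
Your proposal is correct and follows essentially the same route as the paper, which substitutes the $g_2,g_3$ (equivalently $E_4,E_6$) expressions of Theorem \ref{thm-E4-E6-level3-q^3} into $J=12^3 g_2^3/(g_2^3-27g_3^2)$ and factors the discriminant as a polynomial in the two theta quotients, then passes to the second formula via Ramanujan's cubic identity exactly as you do. Your alternative verification of the key factorization $E_4(q^3)^3-E_6(q^3)^2=\tfrac{64}{729}\,c^9(q)\,(a^3(q)-c^3(q))$ via $E_4^3-E_6^2=1728\,\eta^{24}$ and the eta-product forms of $b^3$ and $c^3$ is a legitimate shortcut that the paper does not use, but it is optional and the remainder of your argument coincides with the paper's.
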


\begin{proof}
By equations (\ref{eqn:g2-X-Z(2)}) and (\ref{eqn:g3-X-Z(2)}), we have 
\begin{equation*}
J(\tau)=
\frac
{
12^3
g_2^3
}
{
g_2^3-27 g_3^2
}
=
\frac{110592 \left(9 X^4-X Z\right)^3}{Z^3 \left(8 X^3-Z\right)},
\end{equation*}
which proves the theorem. 
The second formula follows from Ramanujan's cubic identity (\ref{eqn:Ramanujan-cubic-intro}). 
\end{proof}

\subsubsection*{Remark}
The formulas of Theorems \ref{thm-E4-E6-level3-q} and \ref{thm-E4-E6-level3-q^3} were proved in Cooper \cite[pp. 272]{Cooper}.

\section{Applications to number theory}
\label{sec:applications}

\begin{theorem}
(Farkas and Kra \cite[pp. 318]{Farkas-Kra})
\label{thm:1-1/3}
{\it
For every $\tau\in\mathbb{H}^2,$ 
we have 
\begin{equation*}
\frac{d}{d\tau} 
\log \frac{\eta(3\tau)}{\eta(\tau)}
+
\frac{1}{2\pi i } 
\left\{
\frac
{
\theta^{\prime} 
\left[
\begin{array}{c}
1 \\
\frac13
\end{array}
\right](0,\tau) 
}
{
\theta
\left[
\begin{array}{c}
1 \\
\frac13
\end{array}
\right](0,\tau)
}
\right\}^2=0. 
\end{equation*}
}
\end{theorem}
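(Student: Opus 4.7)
The plan is to reduce the claimed identity to the scalar relation
\[
3E_{2}(q^{3}) - E_{2}(q) \;=\; 2\,a^{2}(q),
\]
and then to deduce that relation by comparing the two ODEs for $P(q)=a(q)$ supplied by Theorem \ref{thm-level3-E_2(q)} and Theorem \ref{thm-level3-E_2(q^3)}.

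First I would rewrite the second term on the left-hand side. Proposition \ref{prop:1st-derivative} gives
\[
\left\{\frac{\theta^{\prime}\left[\begin{array}{c}1\\ \frac13\end{array}\right](0,\tau)}{\theta\left[\begin{array}{c}1\\ \frac13\end{array}\right](0,\tau)}\right\}^{\!2} \;=\; \frac{\pi^{2}}{3}\,a^{2}(\tau),
\]
so this term contributes $\frac{1}{2\pi i}\cdot\frac{\pi^{2}}{3}a^{2}(q) = -\frac{\pi i}{6}\,a^{2}(q)$. Next, starting from $\eta(\tau)=q^{1/24}(q;q)_{\infty}$, direct logarithmic differentiation yields $\frac{d}{d\tau}\log\eta(\tau)=\frac{\pi i}{12}E_{2}(q)$, hence
\[
\frac{d}{d\tau}\log\frac{\eta(3\tau)}{\eta(\tau)} \;=\; \frac{\pi i}{12}\bigl(3E_{2}(q^{3})-E_{2}(q)\bigr).
\]
Substituting both computations into the statement of the theorem reduces the claim to the scalar identity displayed above.

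To prove that identity, I would exploit the fact that $P(q)=a(q)$ is the same function in the first ODE of Theorem \ref{thm-level3-E_2(q)} and the first ODE of Theorem \ref{thm-level3-E_2(q^3)}, so the two expressions for $q\,dP/dq$ must agree:
\[
3P^{3}+PE_{2}(q)-4b^{3}(q) \;=\; -3P^{3}+3PE_{2}(q^{3})+4c^{3}(q).
\]
Invoking Ramanujan's cubic identity $b^{3}(q)+c^{3}(q)=a^{3}(q)=P^{3}$ from (\ref{eqn:Ramanujan-cubic-intro}), the cubic terms collapse to $2P^{3}=P\bigl(3E_{2}(q^{3})-E_{2}(q)\bigr)$. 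Since $P=a(q)$ has constant term $1$ and is thus nonvanishing as a power series, cancelling $P$ delivers the required relation.

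The main obstacle is conceptual rather than computational: one must recognise that the squared logarithmic derivative of the theta constant with characteristic $\left[\begin{array}{c}1\\ \frac13\end{array}\right]$ is essentially $a^{2}(q)$ through Proposition \ref{prop:1st-derivative}, and that the compatibility condition between the two ODE systems is exactly the needed relation between $a^{2}(q)$ and the pair $E_{2}(q),\,E_{2}(q^{3})$. Once these two observations are combined with the cubic identity $a^{3}=b^{3}+c^{3}$, the verification is entirely algebraic and no further analytic input is required.
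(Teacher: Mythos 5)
Your argument is correct, but it takes a genuinely different route from the paper. The paper proves this theorem from first principles inside its theta-function machinery: it combines the residue-theorem identity (\ref{eqn:relation-for-thm-1-1/3}) with the heat equation (\ref{eqn:heat}) to turn the ratios $\theta^{\prime\prime}\left[\begin{smallmatrix}1\\ \frac13\end{smallmatrix}\right]/\theta\left[\begin{smallmatrix}1\\ \frac13\end{smallmatrix}\right]$ and $\theta^{\prime\prime\prime}\left[\begin{smallmatrix}1\\ 1\end{smallmatrix}\right]/\theta^{\prime}\left[\begin{smallmatrix}1\\ 1\end{smallmatrix}\right]$ into $\tau$-derivatives of $\log\left(\theta^{3}\left[\begin{smallmatrix}1\\ \frac13\end{smallmatrix}\right]/\theta^{\prime}\left[\begin{smallmatrix}1\\ 1\end{smallmatrix}\right]\right)$, and then identifies that quotient with a power of $\eta(3\tau)/\eta(\tau)$ via Jacobi's triple product (\ref{eqn:Jacobi-triple}); it uses neither of the main theorems nor the cubic identity. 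You instead translate the statement into the scalar identity $3E_{2}(q^{3})-E_{2}(q)=2a^{2}(q)$ and obtain it as the compatibility condition between the two ODEs for the common variable $P=a(q)$, together with $a^{3}=b^{3}+c^{3}$. Your computations all check: the constants $-\frac{\pi i}{6}$ and $\frac{\pi i}{12}$ are right, and cancelling $P$ is legitimate since $P$ is a unit in $\mathbb{C}[[q]]$; there is also no circularity on that side, because Theorems \ref{thm-level3-E_2(q)} and \ref{thm-level3-E_2(q^3)} are proved in Sections \ref{sec:proof:level3-E_2(q)} and \ref{sec:proof:level3-E_2(q^3)} without reference to the present theorem. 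The one caveat concerns your appeal to (\ref{eqn:Ramanujan-cubic-intro}): in the paper that identity is quoted from external sources in Remark~1, but it is also \emph{re-derived} internally in Theorem \ref{thm:a^3(q)} from Theorem \ref{thm:a^2(q)}, which rests on the present theorem. Your proof is therefore sound only because the cubic identity has independent external proofs; if one wanted the derivation of $a^{3}=b^{3}+c^{3}$ in Section \ref{sec:applications} to remain a self-contained proof, the paper's heat-equation argument for this theorem would have to be retained. What your approach buys is a cleaner conceptual reading --- the theorem is precisely the consistency condition between the two Halphen-type systems on $a(q)$ --- at the cost of importing one external identity that the paper prefers to deduce rather than assume.
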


\begin{proof}
The heat equation (\ref{eqn:heat}) and equation (\ref{eqn:relation-for-thm-1-1/3}) implies that 
\begin{equation*}
4\pi i 
\frac{d}{d\tau}
\log 
\frac
{
\theta^3
\left[
\begin{array}{c}
1 \\
\frac13
\end{array}
\right]
}
{
\theta^{\prime}
\left[
\begin{array}{c}
1 \\
1
\end{array}
\right]
}
+
6
\left\{
\frac
{
\theta^{\prime} 
\left[
\begin{array}{c}
1 \\
\frac13
\end{array}
\right]
}
{
\theta
\left[
\begin{array}{c}
1 \\
\frac13
\end{array}
\right]
}
\right\}^2=0. 
\end{equation*}
The theorem follows from Jacobi's triple product identity (\ref{eqn:Jacobi-triple}). 
\end{proof}

\begin{theorem}
\label{thm:a^2(q)}
{\it
For $q\in\mathbb{C}$ with $|q|<1,$ we have 
\begin{equation*}
a^2(q)=1+12\sum_{n=1}^{\infty}(\sigma_1(n)-3\sigma_1(n/3))q^n
=\frac12\left\{   
-E_2(q)+3
E_2(q^3)
\right\}.
\end{equation*}
}
\end{theorem}

\begin{proof}
The theorem follows from Proposition \ref{prop:1st-derivative} and Theorem \ref{thm:1-1/3}. 
\end{proof}

\begin{theorem}
\label{thm:a^3(q)}
{\it
For $q\in\mathbb{C}$ with $|q|<1,$ we have 
\begin{equation*}
a^3(q)=1-9\sum_{n=1}^{\infty} q^n\left( \sum_{d|n} d^2\left( \frac{d}{3}  \right)  \right)
+27
\sum_{n=1}^{\infty} q^n\left( \sum_{d|n} d^2\left( \frac{n/d}{3}  \right)  \right). 
\end{equation*}
}
\end{theorem}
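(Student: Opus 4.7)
The plan is to derive Theorem \ref{thm:a^3(q)} by combining Ramanujan's cubic identity $a^3(q) = b^3(q) + c^3(q)$ (equation \ref{eqn:Ramanujan-cubic-intro} in Remark 1) with the two product-to-series expansions established earlier in Corollary \ref{coro:pro-series-3order}. So the proof is essentially bookkeeping: the entire nontrivial content sits in Corollary \ref{coro:pro-series-3order}, which the paper already derives via Jacobi's triple product and the identity (\ref{eqn:theta-derivative-3rd}).

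The first step is to replace $b^3(q) = \eta^9(\tau)/\eta^3(3\tau)$ by the divisor sum
\begin{equation*}
b^3(q) = 1 - 9 \sum_{n=1}^{\infty} q^n \left( \sum_{d \mid n} d^2 \left( \frac{d}{3} \right) \right),
\end{equation*}
and, after verifying that $\eta^9(3\tau)/\eta^3(\tau) = q \,(q^3;q^3)_{\infty}^9/(q;q)_{\infty}^3$ absorbs the prefactor, to replace $c^3(q) = 27\,\eta^9(3\tau)/\eta^3(\tau)$ by
\begin{equation*}
c^3(q) = 27 \sum_{n=1}^{\infty} q^n \left( \sum_{d \mid n} d^2 \left( \frac{n/d}{3} \right) \right).
\end{equation*}
Adding the two series yields the claimed formula for $a^3(q)$ immediately.

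The only conceptual step worth commenting on is Ramanujan's cubic identity, which is quoted from the literature in Remark 1. If one wished to avoid that external citation, I would instead derive $a^3(q) = b^3(q) + c^3(q)$ internally from the paper's own ODE systems: subtracting the two formulas $12 q\,dP/dq = 3P^3 + P E_2(q) - 4 b^3(q)$ from Theorem \ref{thm-level3-E_2(q)} and $12 q\,dP/dq = -3P^3 + 3 P E_2(q^3) + 4 c^3(q)$ from Theorem \ref{thm-level3-E_2(q^3)} gives
\begin{equation*}
6 P^3 - P\bigl(3 E_2(q^3) - E_2(q)\bigr) = 4\bigl(b^3(q) + c^3(q)\bigr).
\end{equation*}
Using the elementary Fourier-coefficient identity $3 E_2(q^3) - E_2(q) = 2 a^2(q)$, which is a direct rearrangement of Theorem \ref{thm:a^2(q)}, the left-hand side collapses to $4 P^3$, yielding $a^3(q) = b^3(q) + c^3(q)$ without external input. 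The theorem then follows by substituting the Corollary \ref{coro:pro-series-3order} series as in the first step.

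No real obstacle remains; the only minor technical point is the reindexing verification that the $27q$ factor in front of $(q^3;q^3)_{\infty}^9/(q;q)_{\infty}^3$ matches the $q^n$-expansion in the Corollary starting at $n = 1$, which is routine.
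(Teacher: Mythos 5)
Your proposal is correct, and your ``internal'' variant is in fact exactly the paper's own proof: the paper does not invoke the externally cited identity $a^3(q)=b^3(q)+c^3(q)$ from Remark 1, but rederives it by writing the two $P$-equations of Theorems \ref{thm-level3-E_2(q)} and \ref{thm-level3-E_2(q^3)} as $(12q\,d/dq-E_2(q))a=3a^3-4b^3$ and $(4q\,d/dq-E_2(q^3))a=-a^3+\frac43 c^3$, eliminating $q\,(d/dq)a$, and identifying $3E_2(q^3)-E_2(q)=2a^2(q)$ via Theorem \ref{thm:a^2(q)}, precisely as you describe; it then substitutes the product-series expansions (\ref{eqn:main:pro-series(1)}) and (\ref{eqn:main:pro-series(2)}) (equivalently Corollary \ref{coro:pro-series-3order}). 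Your headline route --- quoting Ramanujan's cubic identity from Remark 1 and adding the two series of Corollary \ref{coro:pro-series-3order} --- is also valid but leans on the external references (Berndt, Dickson, Borwein--Borwein--Garvan), whereas the elimination argument keeps the result self-contained within the paper's ODE framework and, as a byproduct, reproves the cubic identity; this is presumably why the author chose it. The only bookkeeping point in either route, the matching of the prefactor $27q$ with the $n\ge 1$ start of the $c^3$ series, is indeed routine.
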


\begin{proof}
From Theorems \ref{thm-level3-E_2(q)} and \ref{thm-level3-E_2(q^3)}, we have 
\begin{equation}
\label{eqn:operator-normal}
\left(12q \frac{d}{dq}-E_2(q)  \right)  a(q)=3 a^3(q)-
4\frac
{
(q;q)_{\infty}^9
}
{
(q^3;q^3)_{\infty}^3
},
\end{equation}
and
\begin{equation}
\label{eqn:operator-not-normal}
\left(4q \frac{d}{dq}-E_2(q^3)  \right) a(q)=-a^3(q)+36q\frac{(q^3;q^3)_{\infty}^9}{(q;q)_{\infty}^3}. 
\end{equation}
Eliminating $q(d/dq)a(q),$ we obtain  
\begin{equation*}
2(1+12\sum_{n=1}^{\infty}(\sigma_1(n)-3\sigma_1(n/3)) q^n )a(q)
=6a^3(q)
-4
\frac{(q;q)_{\infty}^9}{(q^3;q^3)_{\infty}^3}
-108
q\frac{(q^3;q^3)_{\infty}^9}{(q;q)_{\infty}^3}.
\end{equation*}
Therefore, it follows from Theorem \ref{thm:a^2(q)} that
\begin{equation}
\label{eqn:a^3(q)}
a^3(q)=
\frac
{
(q;q)_{\infty}^9
}
{
(q^3;q^3)_{\infty}^3
}
+
27
q\frac{(q^3;q^3)_{\infty}^9}{(q;q)_{\infty}^3}
=
b^3(q)+c^3(q), 
\end{equation}
which proves the theorem. 
\end{proof}

\begin{theorem}
\label{thm:a^4(q)}
{\it
For $q\in\mathbb{C}$ with $|q|<1,$ we have 
\begin{equation*}
a^4(q)=1+24\sum_{n=1}^{\infty}(\sigma_3(n)+9\sigma_3(n/3))q^n. 
\end{equation*}
}
\end{theorem}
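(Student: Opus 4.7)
The plan is to combine the two expressions for $E_4$ derived in Theorems \ref{thm-E4-E6-level3-q} and \ref{thm-E4-E6-level3-q^3} with Ramanujan's cubic identity $a^3(q) = b^3(q) + c^3(q)$ stated in (\ref{eqn:Ramanujan-cubic-intro}), and then read off the $q$-series.

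First I would rewrite Theorem \ref{thm-E4-E6-level3-q} as $9a^4(q) = E_4(q) + 8\,a(q)b^3(q)$, and Theorem \ref{thm-E4-E6-level3-q^3} as $9a^4(q) = 9E_4(q^3) + 8\,a(q)c^3(q)$. Adding these two identities and factoring $a(q)$ out of the mixed terms yields
\begin{equation*}
18\,a^4(q) = E_4(q) + 9E_4(q^3) + 8\,a(q)\bigl(b^3(q) + c^3(q)\bigr).
\end{equation*}
Applying Ramanujan's cubic identity $b^3(q) + c^3(q) = a^3(q)$ collapses the last term to $8\,a^4(q)$, and subtracting it from both sides gives the clean formula
\begin{equation*}
10\,a^4(q) = E_4(q) + 9\,E_4(q^3).
\end{equation*}

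The remaining step is purely formal: I would expand the right-hand side using the definition $E_4(q) = 1 + 240\sum_{n\geq 1}\sigma_3(n)q^n$. The constants contribute $1 + 9 = 10$, matching the leading coefficient. The $q$-part gives $240\sum_{n\geq 1}\sigma_3(n)q^n$ from $E_4(q)$, and $9\cdot 240\sum_{n\geq 1}\sigma_3(n)q^{3n}$ from $9E_4(q^3)$; reindexing the second sum with $m = 3n$ and using the convention $\sigma_3(m/3) = 0$ when $3\nmid m$, both sums combine into $240\sum_{n\geq 1}\bigl(\sigma_3(n)+9\sigma_3(n/3)\bigr)q^n$. Dividing by $10$ produces the claimed identity.

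There is essentially no obstacle here, since Theorems \ref{thm-E4-E6-level3-q} and \ref{thm-E4-E6-level3-q^3} together with the cubic identity $a^3 = b^3 + c^3$ do all the work; the only mild point to be careful about is the bookkeeping when re-indexing $q^{3n}$ so that the Eisenstein contribution at $q^3$ correctly produces the factor $9\sigma_3(n/3)$ with the stated convention on $\sigma_3$ at non-integer arguments.
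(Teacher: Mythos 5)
Your proposal is correct and follows essentially the same route as the paper: both combine the two $E_4$ identities from Theorems \ref{thm-E4-E6-level3-q} and \ref{thm-E4-E6-level3-q^3} with the cubic identity $a^3=b^3+c^3$ to obtain $10\,a^4(q)=E_4(q)+9E_4(q^3)$, and then expand the Eisenstein series. The only cosmetic difference is that the paper cites its own equation (\ref{eqn:a^3(q)}) for the cubic identity rather than (\ref{eqn:Ramanujan-cubic-intro}).
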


\begin{proof}
By Theorems \ref{thm-E4-E6-level3-q} and \ref{thm-E4-E6-level3-q^3}, 
we have 
\begin{equation*}
E_4(q)=9a^4(q)-8a(q)
\frac
{
(q;q)_{\infty}^9
}
{
(q^3;q^3)_{\infty}^3
},  \,\,
E_4(q^3)=a^4(q)-24a(q)
\cdot
q
\frac
{
(q^3;q^3)_{\infty}^9
}
{
(q;q)_{\infty}^3
}.
\end{equation*}
Considering equation (\ref{eqn:a^3(q)}), 
we obtain 
\begin{equation*}
10a^4(q)=E_4(q)+9E_4(q^3),
\end{equation*}
which proves the theorem. 
\end{proof}

\begin{theorem}
\label{thm:a^5(q)}
{\it
For $q\in\mathbb{C}$ with $|q|<1,$ we have 
\begin{equation*}
a^5(q)=
1+
3
\sum_{n=1}^{\infty}
q^n 
\left(
\sum_{d|n}
d^4
\left(
\frac{d}{3}
\right)
\right)
+
27
\sum_{n=1}^{\infty}
q^n 
\left(
\sum_{d|n}
d^4
\left(
\frac{n/d}{3}
\right)
\right). 
\end{equation*}
}
\end{theorem}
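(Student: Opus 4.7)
The plan is to combine two $q$-series identities that are already stated in the excerpt, together with Ramanujan's cubic identity $a^3(q)=b^3(q)+c^3(q)$.

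Specifically, equation (\ref{eqn:a^2(q)b^3(q)}) gives
\begin{equation*}
a^2(q)\,b^3(q)=1+3\sum_{n=1}^{\infty} q^n\left(\sum_{d\mid n} d^4\left(\frac{d}{3}\right)\right),
\end{equation*}
and equation (\ref{eqn:a^2(q)c^3(q)}) gives
\begin{equation*}
a^2(q)\,c^3(q)=27\sum_{n=1}^{\infty} q^n\left(\sum_{d\mid n} d^4\left(\frac{n/d}{3}\right)\right).
\end{equation*}
Adding the two identities, the right-hand side is precisely the claimed $q$-expansion for $a^5(q)$, while the left-hand side factors as $a^2(q)\bigl(b^3(q)+c^3(q)\bigr)$.

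Finally I would invoke Ramanujan's cubic identity, already established in equation (\ref{eqn:a^3(q)}) (or quoted as equation (\ref{eqn:Ramanujan-cubic-intro})), namely $b^3(q)+c^3(q)=a^3(q)$. This immediately upgrades the left-hand side to $a^2(q)\cdot a^3(q)=a^5(q)$, completing the proof. There is essentially no obstacle here: once equations (\ref{eqn:a^2(q)b^3(q)}) and (\ref{eqn:a^2(q)c^3(q)}) are in hand and Ramanujan's cubic identity is available, the theorem is a one-line consequence. The only genuine work was front-loaded into deriving those two $q$-series identities (which come from substituting $z=-1/3$ and $z=-\tau/3$ in equation (\ref{eqn:theta-derivative-5th})) and into establishing $a^3=b^3+c^3$ in Theorem \ref{thm:a^3(q)}.
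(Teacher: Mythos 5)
Your proposal is correct and is exactly the paper's own argument: the paper's proof simply cites equations (\ref{eqn:a^2(q)b^3(q)}), (\ref{eqn:a^2(q)c^3(q)}) and (\ref{eqn:a^3(q)}), and your write-up just makes the addition $a^2(q)\bigl(b^3(q)+c^3(q)\bigr)=a^5(q)$ explicit.
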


\begin{proof}
The theorem follows from equations (\ref{eqn:a^2(q)b^3(q)}), (\ref{eqn:a^2(q)c^3(q)}) and 
(\ref{eqn:a^3(q)}). 
\end{proof}

\begin{theorem}
\label{thm:a^6(q)}
{\it
For $q\in\mathbb{C}$ with $|q|<1,$ we have 
\begin{equation*}
a^6(q)=
1+
\frac{252}{13}
\sum_{n=1}^{\infty}
(\sigma_5(n)-27\sigma_5(n/3)) q^n
+
\frac{216}{13}
q(q;q)_{\infty}^6(q^3;q^3)_{\infty}^6. 
\end{equation*}
}
\end{theorem}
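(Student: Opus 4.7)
The plan is to isolate $a^6(q)$ from the two formulas for $E_6(q)$ and $E_6(q^3)$ provided by Theorems \ref{thm-E4-E6-level3-q} and \ref{thm-E4-E6-level3-q^3}, using Ramanujan's cubic identity $a^3(q)=b^3(q)+c^3(q)$ from Theorem \ref{thm:a^3(q)} to eliminate the mixed $a^3 b^3$ and $a^3 c^3$ terms.

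First, I would substitute $a^3=b^3+c^3$ into both Theorems \ref{thm-E4-E6-level3-q} and \ref{thm-E4-E6-level3-q^3}. This yields
\begin{equation*}
E_6(q) = -27a^6(q) + 28b^6(q) + 36b^3(q)c^3(q),
\end{equation*}
\begin{equation*}
27\,E_6(q^3) = 27a^6(q) - 28c^6(q) - 36b^3(q)c^3(q).
\end{equation*}
Subtracting the first from the second and using $b^6(q)+c^6(q) = a^6(q) - 2b^3(q)c^3(q)$ (the square of Ramanujan's cubic identity), the $b^6$ and $c^6$ terms collapse and one gets
\begin{equation*}
27\,E_6(q^3) - E_6(q) = 26\,a^6(q) - 16\,b^3(q)c^3(q),
\end{equation*}
which rearranges to
\begin{equation*}
a^6(q) = \frac{27\,E_6(q^3) - E_6(q)}{26} + \frac{8}{13}\,b^3(q)c^3(q).
\end{equation*}

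Second, I would compute $b^3(q)c^3(q)$ directly from the product expansions in the statements of Theorems \ref{thm-level3-E_2(q)} and \ref{thm-level3-E_2(q^3)}: the ratios $(q;q)_\infty^9/(q^3;q^3)_\infty^3$ and $27q(q^3;q^3)_\infty^9/(q;q)_\infty^3$ multiply to $27q(q;q)_\infty^6(q^3;q^3)_\infty^6$, so the second term becomes $\frac{216}{13}\,q(q;q)_\infty^6(q^3;q^3)_\infty^6$.

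Third, expanding the Lambert series $E_6(q)=1-504\sum_{n\ge 1}\sigma_5(n)q^n$ and $E_6(q^3)=1-504\sum_{n\ge 1}\sigma_5(n)q^{3n}$ gives
\begin{equation*}
\frac{27\,E_6(q^3) - E_6(q)}{26} = 1 + \frac{252}{13}\sum_{n=1}^{\infty}\bigl(\sigma_5(n) - 27\sigma_5(n/3)\bigr)q^n,
\end{equation*}
since the constant terms contribute $(27-1)/26 = 1$ and the series coefficient is $504/26=252/13$. Combining the two pieces produces exactly the claimed expression. There is no real obstacle here beyond careful bookkeeping of the constants; the substantive content is entirely carried by the earlier theorems, and the novelty is only in spotting that the linear combination $27E_6(q^3)-E_6(q)$ is precisely the one that kills $b^6\pm c^6$ after invoking $a^3=b^3+c^3$.
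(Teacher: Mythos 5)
Your proof is correct and takes essentially the same route as the paper: the paper likewise forms the linear combination $E_6(q)-27E_6(q^3)$ from Theorems \ref{thm-E4-E6-level3-q} and \ref{thm-E4-E6-level3-q^3}, uses $a^3=b^3+c^3$ to reduce it to $-18a^6-8b^6-8c^6$, and then eliminates $b^6+c^6=a^6-2b^3c^3$ to reach $13a^6(q)=13+252\sum_{n\ge 1}(\sigma_5(n)-27\sigma_5(n/3))q^n+8b^3(q)c^3(q)$, finishing with the same product evaluation of $b^3c^3$. The only differences are cosmetic (the sign of the combination and the order in which the cubic identity is substituted).
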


\begin{proof}
By Theorems \ref{thm-E4-E6-level3-q}, \ref{thm-E4-E6-level3-q^3} and equation (\ref{eqn:a^3(q)}), 
we have 
\begin{equation*}
E_6(q)-27E_6(q^3)
=
-
18 a^6(q)-8b^6(q)-8c^6(q),
\end{equation*}
which implies 
\begin{equation*}
9a^6(q)+4b^6(q)+4c^6(q)
=
13+
252
\sum_{n=1}^{\infty}
(\sigma_5(n)-27\sigma_5(n/3)) q^n.
\end{equation*}
By equation (\ref{eqn:a^3(q)}), we obtain 
\begin{equation*}
13a^6(q)
=
13+
252
\sum_{n=1}^{\infty}
(\sigma_5(n)-27\sigma_5(n/3)) q^n
+
8b^3(q) c^3(q),
\end{equation*}
which proves the theorem. 
\end{proof}

\subsubsection*{Remark}
Lomadze \cite{Lomadze} proved 
Theorems \ref{thm:a^2(q)}, \ref{thm:a^3(q)}, \ref{thm:a^4(q)}, and \ref{thm:a^5(q)} 
by means of specific Eisenstein series. 
He also treated 
\begin{equation*}
F_k=x_1^2+x_1x_2+x_2^2+\cdots+x_{2k-1}^2+x_{2k-1}x_{2k}+x_{2k}^2, \,\,(k=1,2,\ldots,17). 
\end{equation*}  
Based on Ramanujan's theory of theta functions, Cooper \cite[pp. 569]{Cooper} treats the case where $k=1,2,\ldots8, 10, 12.$ 
\par
For the elementary proof of Theorems \ref{thm:a^2(q)} and \ref{thm:a^4(q)}, 
readers are referred to the book by Williams \cite[pp. 224-227]{Williams}. 

\section{A selected example of Ramanujan's identity}
\label{sec:Ramanujan-a(q)}

\subsection{Farkas and Kra's cubic identity}

\begin{theorem}
(Farkas and Kra \cite[pp. 193]{Farkas-Kra})
\label{thm:FarkasKraCubic}
{\it
For every $\tau\in\mathbb{H}^2,$ we have 
\begin{equation}
\label{Farkas-Kra-cubic-(1)}
\theta^3
\left[
\begin{array}{c}
\frac13 \\
\frac13
\end{array}
\right]
+
\theta^3
\left[
\begin{array}{c}
\frac13 \\
\frac53
\end{array}
\right]
=
\theta^3
\left[
\begin{array}{c}
\frac13 \\
1
\end{array}
\right],
\end{equation}
and 
\begin{equation}
\label{Farkas-Kra-cubic-(2)}
\exp\left(\frac{\pi i}{3}\right)
\theta^3
\left[
\begin{array}{c}
\frac13 \\
\frac13
\end{array}
\right]
+
\exp\left(\frac{2\pi i}{3}\right)
\theta^3
\left[
\begin{array}{c}
\frac13 \\
\frac53
\end{array}
\right]
=
\theta^3
\left[
\begin{array}{c}
1 \\
\frac13
\end{array}
\right]. 
\end{equation}
}
\end{theorem}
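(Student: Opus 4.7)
The plan is to exploit the theta function spaces $\mathcal{F}_N$ of Subsection 2.4. My strategy for (\ref{Farkas-Kra-cubic-(1)}) is a shift-operator eigenspace argument in the 3-dimensional space $\mathcal{F}_3[1,1]$; (\ref{Farkas-Kra-cubic-(2)}) will follow by an analogous argument in a different $T$-eigenspace.

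First I would verify, via the quasi-periodicity (\ref{eqn:integer-char}), that each of
$$
f_1(\zeta):=\theta^3\!\left[\begin{array}{c}\tfrac13\\\tfrac13\end{array}\right]\!(\zeta),\quad
f_2(\zeta):=\theta^3\!\left[\begin{array}{c}\tfrac13\\1\end{array}\right]\!(\zeta),\quad
f_3(\zeta):=\theta^3\!\left[\begin{array}{c}\tfrac13\\\tfrac53\end{array}\right]\!(\zeta)
$$
lies in $\mathcal{F}_3[1,1]$. Applying (\ref{eqn:real-char}) with shift $\tfrac13$ (i.e., $m=0$, $n=\tfrac23$) yields $\theta[\tfrac13,\epsilon'](\zeta+\tfrac13,\tau)=\theta[\tfrac13,\epsilon'+\tfrac23](\zeta,\tau)$, from which one reads off $f_1(\zeta+\tfrac13)=f_2(\zeta)$, $f_2(\zeta+\tfrac13)=f_3(\zeta)$, and $f_3(\zeta+\tfrac13)=-f_1(\zeta)$. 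Hence the shift $T:g\mapsto g(\,\cdot\,+\tfrac13)$ on $\mathcal{F}_3[1,1]$ satisfies $T^3=-I$ and possesses three distinct eigenvalues $\{-1,\,e^{\pi i/3},\,e^{-\pi i/3}\}$; since $\dim \mathcal{F}_3[1,1]=3$, each eigenspace is one-dimensional. The combination $v(\zeta):=f_1(\zeta)-f_2(\zeta)+f_3(\zeta)$ satisfies $Tv=-v$, so $v$ spans the $(-1)$-eigenspace, and (\ref{Farkas-Kra-cubic-(1)}) is precisely the assertion $v(0)=0$.

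To pin down $v(0)=0$, I would produce a second, explicit $(-1)$-eigenvector visibly vanishing at $\zeta=0$. The natural candidate is
$$
h(\zeta):=\theta\!\left[\begin{array}{c}1\\1\end{array}\right]\!(\zeta)\,\theta\!\left[\begin{array}{c}1\\1\end{array}\right]\!(\zeta-\tfrac13)\,\theta\!\left[\begin{array}{c}1\\1\end{array}\right]\!(\zeta-\tfrac23).
$$
The sum $0+\tfrac13+\tfrac23=1\in\mathbb{Z}$ is exactly what is needed for (\ref{eqn:integer-char}) to place $h$ in $\mathcal{F}_3[1,1]$. Using $\theta[1,1](\zeta+\tfrac13)=\theta[1,1]((\zeta-\tfrac23)+1)=-\theta[1,1](\zeta-\tfrac23)$, the shift $\zeta\to\zeta+\tfrac13$ cyclically permutes the three factors up to an overall sign, giving $Th=-h$. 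Since $\theta[1,1](0)=0$, we have $h(0)=0$; since $h$ is a product of three non-identically-zero theta functions, $h\not\equiv 0$. Therefore $h=c(\tau)\,v$ with $c(\tau)\neq 0$, and $h(0)=0$ forces $v(0)=0$, proving (\ref{Farkas-Kra-cubic-(1)}).

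For (\ref{Farkas-Kra-cubic-(2)}), the function $\theta^3[1,\tfrac13](\zeta)$ also lies in $\mathcal{F}_3[1,1]$, and the family $\{\theta^3[1,\tfrac13],\theta^3[1,1],\theta^3[1,\tfrac53]\}$ is cyclically permuted by $T$ in the same pattern as the $f_j$. Since each $T$-eigenspace is one-dimensional, the projection of $\theta^3[1,\tfrac13]$ onto the $e^{\pm\pi i/3}$-eigenspaces is a scalar multiple of the corresponding $v_\lambda=f_1+\lambda^{-1}f_2-\lambda f_3$; combining the two projections so as to cancel the $f_2(0)$ contribution produces a relation of the form (\ref{Farkas-Kra-cubic-(2)}). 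The main obstacle will be computing the proportionality constants for $\lambda=e^{\pm\pi i/3}$: these eigenspaces do not admit a product factorization analogous to $h$, so I would fix the constants either by comparing leading $q$-coefficients via Jacobi's triple product (\ref{eqn:Jacobi-triple}) or by evaluating the eigenvector identity at an auxiliary point such as a triple zero of one of the cubed theta functions.
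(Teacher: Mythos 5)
Your strategy (an eigenspace decomposition of the shift operator $T\colon g\mapsto g(\cdot+\tfrac13)$ acting on the three-dimensional space $\mathcal{F}_3\!\left[\begin{smallmatrix}1\\1\end{smallmatrix}\right]$) is genuinely different from the paper's, which instead applies the residue theorem to the elliptic functions $\varphi(z)$ and $\psi(z)$ formed from first powers of the theta functions and reads both identities off the vanishing of the residue sum. Your membership and shift computations for $f_1,f_2,f_3$ and for $h$ are correct. However, there are two real gaps. The first is the assertion that ``since $T^3=-I$ and $\dim\mathcal{F}_3\!\left[\begin{smallmatrix}1\\1\end{smallmatrix}\right]=3$, each eigenspace is one-dimensional.'' This is a non sequitur: $T^3=-I$ only forces $T$ to be diagonalizable with eigenvalues among the cube roots of $-1$; it does not force all three roots to occur with multiplicity one (for instance $T=-I$ satisfies $T^3=-I$). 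The conclusion $h=c(\tau)v$, which is the heart of your proof of (\ref{Farkas-Kra-cubic-(1)}), therefore rests on an unproved claim. To repair it you must either prove that $f_1,f_2,f_3$ (or the triple $\theta^3[1,\tfrac13],\theta^3[1,1],\theta^3[1,\tfrac53]$, which $T$ permutes in the same signed cycle) are linearly independent, or show directly that the $(-1)$-eigenspace is one-dimensional --- e.g.\ by observing that $v/h$ is elliptic for the lattice generated by $\tfrac13$ and $\tau$ with at most one simple pole per cell, hence constant. That repair works, but note it is essentially the paper's residue argument reappearing, and it is precisely the step you skipped. (For (\ref{Farkas-Kra-cubic-(1)}) alone the degenerate case $v\equiv0$ is harmless, but the case $v\not\equiv0$ and not proportional to $h$ is excluded only by the missing dimension count.)

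The second gap is that your argument for (\ref{Farkas-Kra-cubic-(2)}) is a plan rather than a proof. The identity to be established carries the specific constants $e^{\pi i/3}$ and $e^{2\pi i/3}$, and in your scheme these arise exactly from the proportionality constants $c_\lambda$ relating the $e^{\pm\pi i/3}$-eigencomponents of $\theta^3[1,\tfrac13]$ to the eigenvectors $v_\lambda$; you explicitly defer computing them. Until those constants are determined and shown to make the coefficient of $f_2(0)$ vanish while producing coefficients $e^{\pi i/3}$ and $e^{2\pi i/3}$ on $f_1(0)$ and $f_3(0)$, identity (\ref{Farkas-Kra-cubic-(2)}) is not proved; moreover, unlike the $(-1)$-eigenspace, the $e^{\pm\pi i/3}$-eigenspaces come with no product representation analogous to $h$, so this computation is the substantive part of the work. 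The paper's function $\psi(z)$ delivers these constants directly from three explicit residues.
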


\begin{proof}
Consider the following elliptic functions:
\begin{equation*}
\varphi(z)
=
\frac
{
\theta^3
\left[
\begin{array}{c}
1 \\
1
\end{array}
\right](z) 
}
{
\theta
\left[
\begin{array}{c}
\frac13 \\
\frac13
\end{array}
\right](z)
\theta
\left[
\begin{array}{c}
\frac13 \\
1
\end{array}
\right](z)
\theta
\left[
\begin{array}{c}
\frac13 \\
\frac53
\end{array}
\right](z)
}, 
\quad
\psi(z)
=
\frac
{
\theta^3
\left[
\begin{array}{c}
1 \\
1
\end{array}
\right](z) 
}
{
\theta
\left[
\begin{array}{c}
\frac13 \\
\frac13
\end{array}
\right](z)
\theta
\left[
\begin{array}{c}
1 \\
\frac13
\end{array}
\right](z)
\theta
\left[
\begin{array}{c}
\frac53 \\
\frac13
\end{array}
\right](z)
}. 
\end{equation*}
We use $\varphi(z)$ to prove equation (\ref{Farkas-Kra-cubic-(1)}). 
Equation (\ref{Farkas-Kra-cubic-(2)}) can be obtained by using $\psi(z)$ in the same way. 
\par
Note that 
in the fundamental parallelogram, 
the poles of $\varphi(z)$ are 
$(\tau+1)/3,$ $\tau/3,$ and $(\tau-1)/3.$ 
Direct computations yield 
\begin{equation*}
\mathrm{Res} \left(\varphi(z), \frac{\tau+1}{3}\right)=
-\frac
{
\theta^3
\left[
\begin{array}{c}
\frac13 \\
\frac13
\end{array}
\right]
}
{
\theta^{\prime}
\left[
\begin{array}{c}
1 \\
1
\end{array}
\right]
\theta^2
\left[
\begin{array}{c}
1 \\
\frac13
\end{array}
\right]
}, \quad
\mathrm{Res} \left(\varphi(z), \frac{\tau}{3}\right)=
\frac
{
\theta^3
\left[
\begin{array}{c}
\frac13 \\
1
\end{array}
\right]
}
{
\theta^{\prime}
\left[
\begin{array}{c}
1 \\
1
\end{array}
\right]
\theta^2
\left[
\begin{array}{c}
1 \\
\frac13
\end{array}
\right]
}, 
\end{equation*}
and
\begin{equation*}
\mathrm{Res} \left(\varphi(z), \frac{\tau-1}{3}\right)=
-\frac
{
\theta^3
\left[
\begin{array}{c}
\frac13 \\
\frac53
\end{array}
\right]
}
{
\theta^{\prime}
\left[
\begin{array}{c}
1 \\
1
\end{array}
\right]
\theta^2
\left[
\begin{array}{c}
1 \\
\frac13
\end{array}
\right]
}.
\end{equation*}
From the residue theorem, 
it follows that 
\begin{equation*}
\mathrm{Res} \left(\varphi(z), \frac{\tau+1}{3}\right)
+
\mathrm{Res} \left(\varphi(z), \frac{\tau}{3}\right)
+
\mathrm{Res} \left(\varphi(z), \frac{\tau-1}{3}\right)=0,
\end{equation*}
which implies equation (\ref{Farkas-Kra-cubic-(1)}). 
\end{proof}

\subsection{Ramanujan's identity}

\begin{theorem}
(Ramanujan \cite[pp. 346]{Berndt-Ramanujan})
\label{thm:ramanujan-level3}
{\it
For every $\tau\in \mathbb{H}^2,$ we have 
\begin{equation*}
a(q)=\frac
{
\eta^3(\tau/3)+3\eta^3(3\tau)
}
{
\eta(\tau)
}. 
\end{equation*}
}
\end{theorem}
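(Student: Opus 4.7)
The strategy is to prove the \emph{cube} of Ramanujan's identity using Farkas and Kra's cubic theta identities (Theorem~\ref{thm:FarkasKraCubic}) together with the decomposition $a^3(q)=b^3(q)+c^3(q)$ from Theorem~\ref{thm:a^3(q)}, and then extract the unique cube root by matching leading $q$-expansions.

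First I would apply Jacobi's triple product identity~(\ref{eqn:Jacobi-triple}) with the elementary factorization $(1+\omega x)(1+\bar{\omega}x)=(1-x^3)/(1-x)$, where $\omega=e^{\pi i/3}$, to evaluate
\[
\theta\!\left[\begin{array}{c}1\\ 1/3\end{array}\right]\!(0,\tau)=\sqrt{3}\,\eta(3\tau),\qquad
\theta\!\left[\begin{array}{c}1/3\\ 1\end{array}\right]\!(0,\tau)=e^{\pi i/6}\,\eta(\tau/3),
\]
and, by a similar but slightly longer product manipulation (using the factorization twice and consolidating Dedekind-eta factors), the product formula
\[
\theta\!\left[\begin{array}{c}1/3\\ 1/3\end{array}\right]\!(0,\tau)\;\theta\!\left[\begin{array}{c}1/3\\ 5/3\end{array}\right]\!(0,\tau)=e^{\pi i/3}\,\frac{\eta^{4}(\tau)}{\eta(\tau/3)\,\eta(3\tau)}.
\]

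Next I would substitute these evaluations into equations~(\ref{Farkas-Kra-cubic-(1)}) and~(\ref{Farkas-Kra-cubic-(2)}), obtaining a $2\times 2$ linear system for $A=\theta^{3}[1/3,1/3]$ and $B=\theta^{3}[1/3,5/3]$ with right-hand sides $i\,\eta^{3}(\tau/3)$ and $3\sqrt{3}\,\eta^{3}(3\tau)$ respectively. Solving gives $A=e^{\pi i/6}\eta^{3}(\tau/3)+3\sqrt{3}\,\eta^{3}(3\tau)$ and $B=e^{5\pi i/6}\eta^{3}(\tau/3)-3\sqrt{3}\,\eta^{3}(3\tau)$. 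Equating the product $AB$ computed directly from these with the cube of the product formula, namely $(\theta[1/3,1/3]\,\theta[1/3,5/3])^{3}=-\eta^{12}(\tau)/\bigl(\eta^{3}(\tau/3)\,\eta^{3}(3\tau)\bigr)$, yields the key eta identity
\[
\eta^{12}(\tau)=\eta^{9}(\tau/3)\,\eta^{3}(3\tau)+9\,\eta^{6}(\tau/3)\,\eta^{6}(3\tau)+27\,\eta^{3}(\tau/3)\,\eta^{9}(3\tau).
\]

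Finally, combining this identity with $a^{3}(q)=b^{3}(q)+c^{3}(q)$ and the eta expressions $b^{3}(q)=\eta^{9}(\tau)/\eta^{3}(3\tau)$, $c^{3}(q)=27\,\eta^{9}(3\tau)/\eta^{3}(\tau)$ from Remark~1, I would compute
\[
\bigl(a(q)\,\eta(\tau)\bigr)^{3}=\frac{\eta^{12}(\tau)}{\eta^{3}(3\tau)}+27\,\eta^{9}(3\tau)=\bigl(\eta^{3}(\tau/3)+3\,\eta^{3}(3\tau)\bigr)^{3},
\]
where the second equality recognizes the right-hand side as $(X+3Y)^{3}$ with $X=\eta^{3}(\tau/3)$ and $Y=\eta^{3}(3\tau)$, via the key identity divided through by $\eta^{3}(3\tau)$. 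Extracting the cube root whose leading $q^{1/24}$ coefficient equals $+1$ (as is the case for both sides) completes the proof. The main obstacle is the product calculation for $\theta[1/3,1/3]\,\theta[1/3,5/3]$, which requires careful bookkeeping of fractional $q$-powers and sixth roots of unity to collapse to a clean ratio of Dedekind etas; once this is done, the cube-root extraction is unambiguous because matching leading coefficients eliminates the usual $\mathbb{Z}/3\mathbb{Z}$ ambiguity.
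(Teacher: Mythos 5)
Your argument is correct, but it takes a genuinely different route from the paper's. The paper derives the identity \emph{linearly}: it quotes an identity of Farkas, equation (\ref{eqn:Farkas}), which expresses $6\theta^{\prime}\left[\begin{smallmatrix}1\\ 1/3\end{smallmatrix}\right]$ in terms of the combination $\zeta_6\theta^3\left[\begin{smallmatrix}1/3\\ 1/3\end{smallmatrix}\right]+\theta^3\left[\begin{smallmatrix}1/3\\ 1\end{smallmatrix}\right]+\zeta_6^5\theta^3\left[\begin{smallmatrix}1/3\\ 5/3\end{smallmatrix}\right]$, uses Theorem \ref{thm:FarkasKraCubic} to eliminate the characteristics $\left[\begin{smallmatrix}1/3\\ 1/3\end{smallmatrix}\right]$ and $\left[\begin{smallmatrix}1/3\\ 5/3\end{smallmatrix}\right]$, and then concludes from Proposition \ref{prop:1st-derivative} and the triple product. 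You instead prove the \emph{cube} of the identity: you solve the same two cubic relations as a linear system for $\theta^3\left[\begin{smallmatrix}1/3\\ 1/3\end{smallmatrix}\right]$ and $\theta^3\left[\begin{smallmatrix}1/3\\ 5/3\end{smallmatrix}\right]$, compare their product with the triple-product evaluation $\theta\left[\begin{smallmatrix}1/3\\ 1/3\end{smallmatrix}\right]\theta\left[\begin{smallmatrix}1/3\\ 5/3\end{smallmatrix}\right]=e^{\pi i/3}\eta^4(\tau)/(\eta(\tau/3)\eta(3\tau))$ to obtain $\eta^{12}(\tau)=\eta^9(\tau/3)\eta^3(3\tau)+9\eta^6(\tau/3)\eta^6(3\tau)+27\eta^3(\tau/3)\eta^9(3\tau)$, and feed this into $a^3=b^3+c^3$ (Theorem \ref{thm:a^3(q)}, whose proof does not depend on the present theorem, so there is no circularity) to recognize $(a\eta)^3$ as a perfect cube. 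I checked your intermediate evaluations ($\theta\left[\begin{smallmatrix}1\\ 1/3\end{smallmatrix}\right]=\sqrt3\,\eta(3\tau)$, $\theta\left[\begin{smallmatrix}1/3\\ 1\end{smallmatrix}\right]=e^{\pi i/6}\eta(\tau/3)$, the solved values of $A$ and $B$, and the resulting eta identity) and they are all correct; the cube-root extraction is also sound, since the ratio of the two holomorphic cube roots is a locally constant cube root of unity fixed to be $1$ by the leading coefficient of $q^{1/24}$. What your route buys is self-containedness --- you avoid the unproved external input (\ref{eqn:Farkas}) entirely --- at the price of the extra cube-root step; the paper's route is shorter but leans on a cited identity. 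One remark: what you actually establish is $a(q)=\bigl(\eta^3(\tau/3)+3\eta^3(3\tau)\bigr)/\eta(\tau)$, equivalently $a(q)=b(q^{1/3})+c(q)$; the statement as printed has $3\eta(3\tau)$ in place of $3\eta^3(3\tau)$, which is inconsistent in weight and must be a typo, so your version is the correct one.
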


\begin{proof}
From the results obtained by Farkas \cite{Farkas}, we recall the following identity:
\begin{align}
&
\frac{
6
\theta^{\prime} 
\left[
\begin{array}{c}
1 \\
\frac13
\end{array}
\right] 
(0,\tau)
}
{
\zeta_6
\theta^3
\left[
\begin{array}{c}
\frac13 \\
\frac13
\end{array}
\right] 
(0,\tau)
+
\theta^3
\left[
\begin{array}{c}
\frac13 \\
1
\end{array}
\right] 
(0,\tau)
+\zeta_6^5
\theta^3
\left[
\begin{array}{c}
\frac13 \\
\frac53
\end{array}
\right] 
(0,\tau)
}    \notag \\
=&
\frac{2\pi i q^{\frac{1}{12}}}
{
\displaystyle
\prod_{n=0}^{\infty} (1-q^{3n+1})(1-q^{3n+2})
}   
=
2\pi i \frac{e^{\frac{\pi i}{6}}}{\sqrt{3}}
\frac
{
\theta
\left[
\begin{array}{c}
1 \\
\frac13
\end{array}
\right] 
(0,\tau)
}
{
\theta
\left[
\begin{array}{c}
\frac13 \\
1
\end{array}
\right] 
(0,3\tau)
},  
\label{eqn:Farkas} 
\end{align}
where $q=\exp(2 \pi i \tau)$ and $\zeta_6=\exp(2 \pi i/6).$ 
\par
Theorem \ref{thm:FarkasKraCubic} yields that 
\begin{equation*}
\theta^3
\left[
\begin{array}{c}
\frac13 \\
\frac13
\end{array}
\right]
=
-
\exp\left( \frac{2 \pi i}{3}    \right)
\theta^3
\left[
\begin{array}{c}
\frac13 \\
1
\end{array}
\right]
+
\theta^3
\left[
\begin{array}{c}
1 \\
\frac13
\end{array}
\right],
\end{equation*}
and
\begin{equation*}
\theta^3
\left[
\begin{array}{c}
\frac13 \\
\frac53
\end{array}
\right]
=
\exp\left( \frac{ \pi i}{3}    \right)
\theta^3
\left[
\begin{array}{c}
\frac13 \\
1
\end{array}
\right]
-
\theta^3
\left[
\begin{array}{c}
1 \\
\frac13
\end{array}
\right],
\end{equation*}
which imply
\begin{equation*}
\frac{
\theta^{\prime}
\left[
\begin{array}{c}
1 \\
\frac13
\end{array}
\right]  
}
{ 
\theta
\left[
\begin{array}{c}
1 \\
\frac13
\end{array}
\right]
}
=
\frac
{
\pi \exp(\frac{2\pi i}{3})
}
{
3\sqrt{3}
}
\times
\frac
{
3
\theta^3
\left[
\begin{array}{c}
\frac13 \\
1
\end{array}
\right]
+
\sqrt{3}
i
\theta^3
\left[
\begin{array}{c}
1 \\
\frac13
\end{array}
\right]
}
{
\theta
\left[
\begin{array}{c}
\frac13 \\
1
\end{array}
\right](0,3\tau)
}. 
\end{equation*}
Therefore, 
the theorem follows from Proposition \ref{prop:1st-derivative} and Jacobi's triple product identity (\ref{eqn:Jacobi-triple}).
\end{proof}

\section{Additional product-series identities}
\label{sec:more-pro-series-level3}

\subsection{Selected theta functional formulas}

\begin{proposition}
\label{prop:theta-function-identities-level3}
{\it
For every $(z,\tau)\in\mathbb{C}\times\mathbb{H}^2,$ we have 
\begin{align}
&
\theta^2
\left[
\begin{array}{c}
\frac13 \\
1
\end{array}
\right]
\theta
\left[
\begin{array}{c}
1 \\
\frac13
\end{array}
\right](z)
\theta
\left[
\begin{array}{c}
1 \\
\frac53
\end{array}
\right](z)
+
\exp \left(\frac{\pi i}{3} \right)
\theta^2
\left[
\begin{array}{c}
1 \\
\frac13
\end{array}
\right]
\theta
\left[
\begin{array}{c}
\frac13 \\
1
\end{array}
\right](z)
\theta
\left[
\begin{array}{c}
\frac53 \\
1
\end{array}
\right](z) \notag \\
&\hspace{20mm}
-
\theta
\left[
\begin{array}{c}
\frac13 \\
\frac13
\end{array}
\right]
\theta
\left[
\begin{array}{c}
\frac13 \\
\frac53
\end{array}
\right]
\theta^2 
\left[
\begin{array}{c}
1 \\
1
\end{array}
\right](z)=0,   \label{eqn:level3-(1,1/3),(1/3,1)}
\end{align}
\begin{align}
&
\theta^2
\left[
\begin{array}{c}
\frac13 \\
\frac53
\end{array}
\right]
\theta
\left[
\begin{array}{c}
\frac13 \\
\frac13
\end{array}
\right](z)
\theta
\left[
\begin{array}{c}
\frac53\\
\frac53
\end{array}
\right](z)
+
\theta^2
\left[
\begin{array}{c}
\frac13 \\
\frac13
\end{array}
\right]
\theta
\left[
\begin{array}{c}
\frac13 \\
\frac53
\end{array}
\right](z)
\theta
\left[
\begin{array}{c}
\frac53 \\
\frac13
\end{array}
\right](z)  \notag \\
&\hspace{20mm}
-
\exp \left(\frac{2\pi i}{3} \right)
\theta
\left[
\begin{array}{c}
\frac13 \\
1
\end{array}
\right]
\theta
\left[
\begin{array}{c}
1 \\
\frac13
\end{array}
\right]
\theta^2 
\left[
\begin{array}{c}
1 \\
1
\end{array}
\right](z)=0.   \label{eqn:level3-(1/3,1/3),(1/3,5/3)}
\end{align}
}
\end{proposition}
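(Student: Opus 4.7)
The plan is to exploit the dimension formula $\dim \mathcal{F}_N[\epsilon, \epsilon'] = N$ from Section \ref{sec:properties}. Let $f_1(z), f_2(z), f_3(z)$ denote, in order, the three theta products in $z$ appearing on the left of identity (\ref{eqn:level3-(1,1/3),(1/3,1)}), stripped of their $\tau$-only coefficients. Using the basic quasi-periodicity (\ref{eqn:integer-char}) and the characteristic-shift rule, I would first verify that each $f_i$ satisfies the same pair of functional equations $f_i(z+1) = f_i(z)$ and $f_i(z+\tau) = e^{-4\pi i z - 2\pi i \tau} f_i(z)$. Thus all three lie in the single space $\mathcal{F}_2[0, 0]$, which is 2-dimensional. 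Any three vectors in a 2-dimensional space are linearly dependent, so there exist $\tau$-dependent constants $A, B, C$, not all zero, with $A f_1(z) + B f_2(z) + C f_3(z) \equiv 0$.

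Next I would pin down the ratios $A : B : C$ by evaluating at two suitably chosen points. The natural choices are $z = 0$ (where $f_3(0) = 0$, since $\theta[1,1](0) = 0$) and $z = \tau/3$ (where $f_2(\tau/3) = 0$, since $\theta[1/3, 1](z)$ has its unique zero in the fundamental parallelogram at $\tau/3$). Each evaluation produces one linear equation in $(A, B, C)$; the resulting theta values are rewritten in terms of the standard constants $\theta[\epsilon, \epsilon']$ with $(\epsilon, \epsilon') \in \{1/3, 1\}^2$ by repeated application of the half-period shift formula (\ref{eqn:real-char}), the integer-shift rule, and the even/odd identity. Together these two equations determine $A : B : C$ up to a global scalar and match the stated coefficients $\theta^2[1/3, 1]$, $e^{\pi i/3}\theta^2[1, 1/3]$, and $-\theta[1/3, 1/3]\,\theta[1/3, 5/3]$.

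For identity (\ref{eqn:level3-(1/3,1/3),(1/3,5/3)}) the argument is entirely analogous: the three summands again share one pair of functional equations, so they span a common $\mathcal{F}_2[\cdot,\cdot]$; the two evaluating points that work cleanly are $z = 0$ (which kills the $\theta^2[1,1](z)$ term) and $z = (\tau - 1)/3$ (which kills the middle term, being the zero of $\theta[1/3, 5/3](z)$).

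I expect the main obstacle to be the bookkeeping of phases. Writing evaluations such as $\theta[1, 5/3](0)$ or $\theta[5/3, 1](\tau/3)$ in terms of the standard theta constants requires iterated use of the rule $\theta[\epsilon+2m, \epsilon'+2n] = e^{\pi i \epsilon n}\theta[\epsilon, \epsilon']$ combined with the half-period shift and the even/odd identity, and the sixth roots of unity $e^{\pi i / 3}$ and $e^{2\pi i / 3}$ in the statement arise precisely from these phase adjustments. An equivalent, perhaps more symmetric, route follows the residue-theorem template already used in the proofs of Proposition \ref{application-residue-theorem} and Theorem \ref{thm:FarkasKraCubic}: one introduces an elliptic function of the form $\theta^3[1,1](z)$ divided by a product of three first-order theta functions whose zeros match those of $f_1, f_2, f_3$, and reads off the identity from the vanishing of the sum of residues around the fundamental parallelogram.
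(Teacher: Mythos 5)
Your proposal is correct and follows essentially the same route as the paper: both place the three products $\theta[1,\tfrac13](z)\theta[1,\tfrac53](z)$, $\theta[\tfrac13,1](z)\theta[\tfrac53,1](z)$, $\theta^2[1,1](z)$ in the two-dimensional space $\mathcal{F}_2\!\left[\begin{smallmatrix}0\\0\end{smallmatrix}\right]$, deduce a nontrivial linear dependence, and determine the coefficients by evaluating at the zeros $z=1/3$, $z=\tau/3$, $z=0$ of the constituent theta functions (the paper uses all three evaluation points where you use two, which is an immaterial difference). The residue-theorem variant you mention at the end is a genuine alternative but is not what the paper does here.
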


\begin{proof}
We prove equation (\ref{eqn:level3-(1,1/3),(1/3,1)}).  
Equation (\ref{eqn:level3-(1/3,1/3),(1/3,5/3)}) can be proved in the same way. 
We first note that 
$
\dim \mathcal{F}_{2}\left[
\begin{array}{c}
0 \\
0
\end{array}
\right]
=
2,
$ 
and 
\begin{equation*}
\theta
\left[
\begin{array}{c}
1 \\
\frac13
\end{array}
\right](z,\tau)
\theta
\left[
\begin{array}{c}
1 \\
\frac53
\end{array}
\right](z,\tau)
, \,
\theta 
\left[
\begin{array}{c}
\frac13 \\
1
\end{array}
\right](z,\tau)
\theta
\left[
\begin{array}{c}
\frac53 \\
1
\end{array}
\right](z,\tau), \,
\theta^2
\left[
\begin{array}{c}
1 \\
1
\end{array}
\right](z,\tau) \in
\mathcal{F}_{2}\left[
\begin{array}{c}
0 \\
0
\end{array}
\right].  
\end{equation*}
Therefore, there exist some complex numbers, $x_1, x_2$, and $ x_3$, not all of which are zero, such that 
 \begin{align*}
 &
 x_1
\theta
\left[
\begin{array}{c}
1 \\
\frac13
\end{array}
\right](z,\tau)
\theta
\left[
\begin{array}{c}
1 \\
\frac53
\end{array}
\right](z,\tau)
+x_2
 \theta
\left[
\begin{array}{c}
\frac13\\
1
\end{array}
\right](z,\tau)
\theta
\left[
\begin{array}{c}
\frac53 \\
1
\end{array}
\right](z,\tau)
+
x_3
\theta^2
\left[
\begin{array}{c}
1 \\
1
\end{array}
\right](z,\tau)=0. 
\end{align*}
Note that in the fundamental parallelogram, 
the zero of 
$
\theta
\left[
\begin{array}{c}
1 \\
\frac13
\end{array}
\right](z),
$ 
$
\theta
\left[
\begin{array}{c}
\frac13 \\
1
\end{array}
\right](z),
$ 
or 
$
\theta
\left[
\begin{array}{c}
1 \\
1
\end{array}
\right](z)
$ 
is $z=1/3,$ $\tau/3$ or $0.$ 
Substituting $z=1/3, \tau/3,$ and $0,$ we have 
\begin{alignat*}{4}
&
&    
&
x_2
\exp \left(-\frac{\pi i}{3}\right)
\theta
\left[
\begin{array}{c}
\frac13\\
\frac13
\end{array}
\right]  
\theta
\left[
\begin{array}{c}
\frac13 \\
\frac53
\end{array}
\right]  
&    
&+x_3
\theta^2
\left[
\begin{array}{c}
1 \\
\frac13
\end{array}
\right]  
&  
&=0,  \\
&x_1
\theta
\left[
\begin{array}{c}
\frac13\\
\frac13
\end{array}
\right]  
\theta
\left[
\begin{array}{c}
\frac13 \\
\frac53
\end{array}
\right]  
&  
&
&
&+x_3
\theta^2
\left[
\begin{array}{c}
\frac13\\
1
\end{array}
\right]  
&  
&=0,  \\
&
-
x_1
\theta^2
\left[
\begin{array}{c}
1 \\
\frac13
\end{array}
\right]  
&
+&x_2
\exp \left(-\frac{ \pi i}{3}\right)
\theta^2\left[
\begin{array}{c}
\frac13 \\
1
\end{array}
\right] 
&
&
&
&=0. 
\end{alignat*}
Solving this system of equations, 
we have 
\begin{equation*}
(x_1,x_2,x_3)=\alpha
\left(
\theta^2\left[
\begin{array}{c}
\frac13 \\
1
\end{array}
\right], 
\exp \left(\frac{\pi i}{3} \right)
\theta^2\left[
\begin{array}{c}
1 \\
\frac13
\end{array}
\right], 
-
\theta\left[
\begin{array}{c}
\frac13 \\
\frac13
\end{array}
\right] 
\theta\left[
\begin{array}{c}
\frac13 \\
\frac53
\end{array}
\right] 
\right) \,\,\,   \text{for some} \,\,\alpha\in\mathbb{C}\setminus\{0\},
\end{equation*}
which proves the proposition. 
\end{proof}

\subsection{Product-series identities}

\begin{theorem}
{\it
For every $\tau \in\mathbb{H}^2,$ we have 
\begin{equation}
\label{eqn:pro-series-deg1-(1,1/3),(1/3,1)}
\frac
{
\eta^{10}(3\tau)
}
{
\eta^3( \tau) \eta^3(9 \tau)
}
=
1
+
3
\sum_{n=1}^{\infty}
(\sigma_1(n)-9 \sigma_1(n/9) ) q^n,
\end{equation}
and
\begin{equation}
\label{eqn:pro-series-deg1-(1/3,1/3),(1/3,5/3)}
\frac
{
\eta^{3}(\tau) \eta^3(9\tau)
}
{
\eta^2( 3 \tau) 
}
=
\sum_{n=0}^{\infty}
\sigma_1(3n+1)
q^{3n+1}
-
\sum_{n=0}^{\infty}
\sigma_1(3n+2)
q^{3n+2}. 
\end{equation}
where $q=\exp(2 \pi i \tau).$ 
}
\end{theorem}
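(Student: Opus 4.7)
The plan is to adapt the argument behind Theorem \ref{thm:1-1/3}: apply $\tfrac{d^2}{dz^2}|_{z=0}$ to each theta-functional identity of Proposition \ref{prop:theta-function-identities-level3}, use the heat equation (\ref{eqn:heat}) to convert $\theta''[\epsilon,\epsilon']/\theta[\epsilon,\epsilon']$ into $4\pi i\,\partial_\tau\log\theta[\epsilon,\epsilon']$, and then express the result in terms of $\eta$-quotients via Jacobi's triple product (\ref{eqn:Jacobi-triple}). The preparatory step is to use the integer-shift relation for theta characteristics together with $\theta[-\epsilon,-\epsilon'](\zeta)=\theta[\epsilon,\epsilon'](-\zeta)$ to rewrite the summands of (\ref{eqn:level3-(1,1/3),(1/3,1)}) and (\ref{eqn:level3-(1/3,1/3),(1/3,5/3)}) in the form $c\,\theta(z)\theta(-z)$, which are even functions of $z$. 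The $z=0$ identity then becomes automatic, and the first nontrivial information is the $z^2$-coefficient, where $[\theta(z)\theta(-z)]''|_{z=0}=2\theta\theta''-2(\theta')^2$ and $[\theta^2[1,1](z)]''|_{z=0}=2(\theta'[1,1])^2$.

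For (\ref{eqn:pro-series-deg1-(1,1/3),(1/3,1)}), applying this procedure to (\ref{eqn:level3-(1,1/3),(1/3,1)}), dividing by $\theta^2[1,\tfrac13]\,\theta^2[\tfrac13,1]$, and invoking the heat equation, we obtain
\begin{equation*}
4\pi i\,\frac{d}{d\tau}\log\frac{\theta[\frac{1}{3},1]}{\theta[1,\frac{1}{3}]}=\frac{\theta[\frac{1}{3},\frac{1}{3}]\,\theta[\frac{1}{3},\frac{5}{3}]\,(\theta'[1,1])^2}{\theta^2[1,\frac{1}{3}]\,\theta^2[\frac{1}{3},1]}-\left(\frac{\theta'[1,\frac{1}{3}]}{\theta[1,\frac{1}{3}]}\right)^2+\left(\frac{\theta'[\frac{1}{3},1]}{\theta[\frac{1}{3},1]}\right)^2.
\end{equation*}
Jacobi's triple product gives $\theta[1,\tfrac{1}{3}]=\sqrt{3}\,\eta(3\tau)$, $\theta[\tfrac{1}{3},1]=e^{\pi i/6}\eta(\tau/3)$ and $\theta'[1,1]=-2\pi\,\eta^3(\tau)$; using $(1+e^{\pi i/3}w)(1+e^{-\pi i/3}w)=1+w+w^2$ inside the triple product and telescoping the resulting factors yields $\theta[\tfrac{1}{3},\tfrac{1}{3}]\,\theta[\tfrac{1}{3},\tfrac{5}{3}]=e^{\pi i/3}\eta^4(\tau)/(\eta(\tau/3)\eta(3\tau))$. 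Substituting these expressions together with Proposition \ref{prop:1st-derivative} for the two squared logarithmic derivatives, and eliminating the resulting $a^2$-terms through Theorem \ref{thm:a^2(q)} (equivalently $2a^2(\tau)=3E_2(q^3)-E_2(q)$), the $E_2(q)$-contributions cancel and one arrives at $\eta^{10}(\tau)/(\eta^3(\tau/3)\eta^3(3\tau))=(9E_2(q^3)-E_2(q^{1/3}))/8$. The substitution $\tau\mapsto 3\tau$ then produces (\ref{eqn:pro-series-deg1-(1,1/3),(1/3,1)}).

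The same machinery applied to (\ref{eqn:level3-(1/3,1/3),(1/3,5/3)}) produces an analogous formula involving $4\pi i\,\partial_\tau\log\bigl(\theta[\tfrac{1}{3},\tfrac{1}{3}]\,\theta[\tfrac{1}{3},\tfrac{5}{3}]\bigr)$ and a ratio carrying $\theta[\tfrac{1}{3},1]\,\theta[1,\tfrac{1}{3}](\theta'[1,1])^2$. The main obstacle here is that Proposition \ref{prop:1st-derivative} does not directly evaluate $(\theta'[\tfrac{1}{3},\tfrac{1}{3}]/\theta[\tfrac{1}{3},\tfrac{1}{3}])^2$ or $(\theta'[\tfrac{1}{3},\tfrac{5}{3}]/\theta[\tfrac{1}{3},\tfrac{5}{3}])^2$. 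I will handle them collectively by summing the relations (\ref{eqn:relation-for-thm-1/3-1/3}) and (\ref{eqn:relation-for-thm-1/3-5/3}), applying the heat equation to the $\theta''/\theta$-terms, and using Proposition \ref{prop:E2} ($\theta'''[1,1]/\theta'[1,1]=-\pi^2 E_2(q)$) to rewrite the sum as $-\pi^2 E_2(q)/3-2\pi i\,\partial_\tau\log(\theta[\tfrac{1}{3},\tfrac{1}{3}]\,\theta[\tfrac{1}{3},\tfrac{5}{3}])$. The $\eta$-quotient reduction then proceeds exactly as before, the sixth-root-of-unity phase factors cancel in the final combination, and the substitution $\tau\mapsto 3\tau$ yields (\ref{eqn:pro-series-deg1-(1/3,1/3),(1/3,5/3)}).
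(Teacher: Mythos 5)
Your treatment of the first identity (\ref{eqn:pro-series-deg1-(1,1/3),(1/3,1)}) is correct and is in essence the paper's own argument: both extract the $z^{2}$-coefficient of (\ref{eqn:level3-(1,1/3),(1/3,1)}), apply the heat equation, and finish with Jacobi's triple product. The only substantive difference is how the two squared logarithmic derivatives are removed. The paper substitutes (\ref{eqn:relation-for-thm-1-1/3}) and (\ref{eqn:relation-for-thm-1/3-1}) so that the $\theta^{\prime\prime\prime}\bigl[\begin{smallmatrix}1\\1\end{smallmatrix}\bigr]/\theta^{\prime}\bigl[\begin{smallmatrix}1\\1\end{smallmatrix}\bigr]$ terms cancel and everything collapses to $\tfrac32\bigl(\theta^{\prime\prime}\bigl[\begin{smallmatrix}1/3\\1\end{smallmatrix}\bigr]/\theta\bigl[\begin{smallmatrix}1/3\\1\end{smallmatrix}\bigr]-\theta^{\prime\prime}\bigl[\begin{smallmatrix}1\\1/3\end{smallmatrix}\bigr]/\theta\bigl[\begin{smallmatrix}1\\1/3\end{smallmatrix}\bigr]\bigr)=6\pi i\,\partial_{\tau}\log\bigl(\theta\bigl[\begin{smallmatrix}1/3\\1\end{smallmatrix}\bigr]/\theta\bigl[\begin{smallmatrix}1\\1/3\end{smallmatrix}\bigr]\bigr)$, whereas you evaluate them as $-\tfrac{\pi^{2}}{3}a^{2}(\tau)$ and $-\tfrac{\pi^{2}}{9}a^{2}(\tau/3)$ via Proposition \ref{prop:1st-derivative} and then appeal to Theorem \ref{thm:a^2(q)}. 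I checked your constants; the intermediate identity $\eta^{10}(\tau)/(\eta^{3}(\tau/3)\eta^{3}(3\tau))=(9E_{2}(q^{3})-E_{2}(q^{1/3}))/8$ is right and yields the statement after $\tau\mapsto 3\tau$. Your route is slightly longer but buys an independence check against Theorem \ref{thm:a^2(q)}.

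The second half has a genuine gap. Your preparatory claim that the summands of (\ref{eqn:level3-(1/3,1/3),(1/3,5/3)}) become even products $c\,\theta(z)\theta(-z)$ so that ``the $z=0$ identity is automatic'' fails there: the shift rules give $\theta\bigl[\begin{smallmatrix}5/3\\5/3\end{smallmatrix}\bigr](0)=e^{-\pi i/3}\theta\bigl[\begin{smallmatrix}1/3\\1/3\end{smallmatrix}\bigr]$ and $\theta\bigl[\begin{smallmatrix}5/3\\1/3\end{smallmatrix}\bigr](0)=e^{-\pi i/3}\theta\bigl[\begin{smallmatrix}1/3\\5/3\end{smallmatrix}\bigr]$, so at $z=0$ the first two terms of (\ref{eqn:level3-(1/3,1/3),(1/3,5/3)}) are both equal to $e^{-\pi i/3}\theta^{2}\bigl[\begin{smallmatrix}1/3\\1/3\end{smallmatrix}\bigr]\theta^{2}\bigl[\begin{smallmatrix}1/3\\5/3\end{smallmatrix}\bigr]$ and add to something nonzero while the third term vanishes. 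Hence that identity cannot be used in its printed form (evaluating at $z=0$ in the $\dim\mathcal{F}_{2}=2$ argument forces a relative minus sign between the first two terms), and your computation is calibrated to the wrong sign: with the corrected sign the $z^{2}$-coefficient produces the \emph{difference} of $\theta^{\prime\prime}/\theta-(\theta^{\prime}/\theta)^{2}$ over the characteristics $\bigl[\begin{smallmatrix}1/3\\1/3\end{smallmatrix}\bigr]$ and $\bigl[\begin{smallmatrix}1/3\\5/3\end{smallmatrix}\bigr]$, so you need the difference of (\ref{eqn:relation-for-thm-1/3-1/3}) and (\ref{eqn:relation-for-thm-1/3-5/3}) (which eliminates $E_{2}(q)$ altogether), not their sum. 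As written, your sum-based reduction leads to an equation whose two sides have incompatible phases on the imaginary axis (one side real, the other a nonzero purely imaginary multiple of $\eta^{3}(\tau/3)\eta^{3}(3\tau)/\eta^{2}(\tau)$), so (\ref{eqn:pro-series-deg1-(1/3,1/3),(1/3,5/3)}) is not established by your argument.
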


\begin{proof}
By equations (\ref{eqn:relation-for-thm-1-1/3}),  (\ref{eqn:relation-for-thm-1/3-1}), and (\ref{eqn:level3-(1,1/3),(1/3,1)}), 
we derive equation (\ref{eqn:pro-series-deg1-(1,1/3),(1/3,1)}). 
Equation (\ref{eqn:pro-series-deg1-(1/3,1/3),(1/3,5/3)}) can be proved in the same way. 
\par
Comparing the coefficients of the term $z^2$ in equation (\ref{eqn:level3-(1,1/3),(1/3,1)}), 
we have 
\begin{align*}
\left\{
\theta^{\prime}
\left[
\begin{array}{c}
1 \\
1
\end{array}
\right]
\right\}^2
\frac
{
\theta
\left[
\begin{array}{c}
\frac13 \\
\frac13
\end{array}
\right]
\theta
\left[
\begin{array}{c}
\frac13 \\
\frac53
\end{array}
\right]
}
{
\theta^2
\left[
\begin{array}{c}
1 \\
\frac13
\end{array}
\right]
\theta^2
\left[
\begin{array}{c}
\frac13 \\
1
\end{array}
\right]
}
=&
\frac32
\left(
\frac
{
\theta^{\prime \prime}
\left[
\begin{array}{c}
\frac13 \\
1
\end{array}
\right]
}
{
\theta
\left[
\begin{array}{c}
\frac13 \\
1
\end{array}
\right]
}
-
\frac
{
\theta^{\prime \prime}
\left[
\begin{array}{c}
1 \\
\frac13
\end{array}
\right]
}
{
\theta
\left[
\begin{array}{c}
1 \\
\frac13
\end{array}
\right]
}
\right)  \\
=&
6 \pi i
\frac{d}{d \tau}
\log
\frac
{
\theta
\left[
\begin{array}{c}
\frac13 \\
1
\end{array}
\right]
}
{
\theta
\left[
\begin{array}{c}
1 \\
\frac13
\end{array}
\right]
}. 
\end{align*}
Therefore, 
equation (\ref{eqn:pro-series-deg1-(1,1/3),(1/3,1)}) can be obtained 
by Jacobi's triple product identity (\ref{eqn:Jacobi-triple}).
\end{proof}




\end{document}